\newtheorem{theorem}{Theorem}[section]
\newtheorem{lemma}[theorem]{Lemma}
\newtheorem{proposition}[theorem]{Proposition}
\theoremstyle{definition}
\newtheorem{assumption}[theorem]{Assumption}
\theoremstyle{remark}
\newtheorem{remark}[theorem]{Remark}
\numberwithin{equation}{section}
\DeclareMathOperator{\Div}{div}
\DeclareMathOperator{\Span}{span}
\newcommand{\e}{\operatorname{e}}
\newcommand{\im}{\mathrm{i}}
\newcommand{\Z}{\mathbf{Z}}
\newcommand{\R}{\mathbf{R}}
\newcommand{\C}{\mathbf{C}}
\newcommand{\uno}{\mathbbm{1}}
\newcommand{\E}{\mathbb{E}}
\newcommand{\Prob}{\mathbb{P}}
\newcommand{\loc}{{\textrm{\tiny loc}}}
\newcommand{\scal}[1]{\langle #1 \rangle}
\newcommand{\eqdef}{\vcentcolon=}
\newcommand{\Torus}[1][2]{{\mathbb{T}_{#1}}}
\newcommand{\levelset}{\mathcal{L}}
\newcommand{\cov}{\mathcal{C}}
\newcommand{\memo}[1]{\ensuremath{\framebox{\tiny\textbf{\kern-2pt\textsf{#1}}\kern-2pt}}\xspace}
\newcounter{ConstantsCounter}
\newcommand{\mconst}{\stepcounter{ConstantsCounter}\ensuremath{c_\text{\theConstantsCounter}}}
\newcommand{\mcref}[1]{\ensuremath{c_\text{\ref{#1}}}}
\newcommand{\mcdef}[1]{\stepcounter{ConstantsCounter}\protected@write\@auxout{}{    \string\newlabel{#1}{{\theConstantsCounter}{}{}{constant.\theConstantsCounter}{}}}\hypertarget{constant.\theConstantsCounter}{\mcref{#1}}}
\def\MRnum#1 #2\empty{#1}
\renewcommand{\MRhref}[2]{
  \href{http://www.ams.org/mathscinet-getitem?mr=#1}{#2}}
\renewcommand{\MR}[1]{
  \relax
  \ifhmode\unskip\space\fi
  \MRhref{\MRnum#1\empty}
  {\texttt{\Tiny[MR\MRnum#1\empty]}}
}
\begin{document}
  \title[Hausdorff dimension of level sets]
    {Hausdorff dimension of the level sets of some stochastic PDEs from fluid dynamics}
  \author[L. Baglioni]{Lorenzo Baglioni}
    \address{Dipartimento di Matematica, Università di Pisa, Largo Bruno Pontecorvo 5, I--56127 Pisa, Italia}
    \email{\href{mailto:baglioni@dm.unipi.it}{baglioni@dm.unipi.it}}
  \author[M. Romito]{Marco Romito}
    \address{Dipartimento di Matematica, Università di Pisa, Largo Bruno Pontecorvo 5, I--56127 Pisa, Italia}
    \email{\href{mailto:romito@dm.unipi.it}{romito@dm.unipi.it}}
    \urladdr{\url{http://www.dm.unipi.it/pages/romito}}
  \subjclass[2010]{Primary 76M35, 60G17; Secondary 35B05, 35Q35, 35R60, 60H15, 60H30}
  \keywords{level sets, stochastic PDEs, Navier--Stokes $\alpha$--models}
  \date{August 19, 2013}
  \begin{abstract}
    We determine with positive probability the Hausdorff dimension of
    the level sets of a class of Navier--Stokes $\alpha$--models at
    finite viscosity, forced by mildly rough Gaussian white noise. 
  \end{abstract}
\maketitle
\section{Introduction}

This paper address the problem of determining the Hausdorff dimension
of the level sets of the solutions of some stochastic PDEs from fluid
dynamics in two space dimensions. Consider the following stochastic PDE,
\[
  \dot\theta
      + \nu (-\Delta)^\alpha\theta
      + u\cdot\nabla\theta
     = \dot\eta,
        \qquad t\geq0,x\in\Torus,
\]
on $[-\pi,\pi]^2$, with periodic boundary conditions and zero spatial
mean, where $\nu$, $\alpha$ and $M$ are suitable parameters, $\dot\eta$
is Gaussian noise and the transport velocity is given by
$u = \nabla^\perp (-\Delta)^{-M}\theta$.
We prove almost sure upper bounds, as well as lower bounds with positive
probability, on the Hausdorff and packing dimension of the level
sets of the random field $\theta(t)$ at any positive time $t>0$.

Our equation \eqref{e:spde} belongs to the class of \emph{Navier--Stokes
$\alpha$--like models} (see for instance \cite{OlsTit2007} and
the reference therein) and, when $\alpha=1$, the 2D Navier--Stokes
equations in the vorticity formulation correspond to $M=1$, the
surface quasi--geostrophic equation corresponds to $M=\tfrac12$,
while $M=-1$ describes the large scale flows of a rotating shallow
fluid (see \cite{Fal2007a}). In this paper we will assume $\alpha>1$
and $M\geq1$. Unfortunately, our results do not cover the above
mentioned cases, although they are the motivating examples.

Our interest in level sets for equations from fluid dynamics is
inspired by the theory developed for the $0$--level set
at a physical and numerical level \cite{Fal2007a}. Our technique
is not powerful enough to be able to say something about the
``physical case'', for a series of reasons: we work at finite
viscosity and not in the vanishing viscosity regime, we assume
hyper--dissipation, due to the limits of the techniques we employ,
we are not able to capture all of the interesting cases of
the transport velocity, we are dealing with rough noise.
On the other hand, this paper is a preliminary work and 
extensions are currently being developed.

The results presented here focus on any level set. On the other
hand the zero spatial mean condition gives a privileged status
to the zero level set. It would be expected to obtain stronger
results on the zero level set than on any other level set.
We conjecture (see Remark \ref{r:failed} that indeed the zero
level set should have a ``deterministic'' (that is, determined
almost surely) dimension.

We give a few details on the techniques used to achieve our results.
As in \cite{DalKhoNua2007,DalKhoNua2009}, the non--linear problem
is reduced to the linear problem (namely, the same equation without
the non--linearity or, in other words, the linearization at $0$)
by means of an absolute continuity result between the laws of the
two processes. In the above references the equivalence is provided
by the Girsanov theorem. In our problem Girsanov's transformation
cannot be applied (see Remark \ref{r:girsanov}), and we apply a weaker
result from \cite{DapDeb2004}, using the polynomial moments from
\cite{EssSta2010}. On the one hand this gives equivalence of the
laws at the level of the single time rather than of the full path,
but on the other hand this is enough for our purpose. The equivalence
result is already known from \cite{MatSui2005,Wat2010} and ours is
an alternative proof.

Once the problem is reduced to a linear equation, it
becomes more amenable and one can use the theory developed for
Gaussian processes \cite{Kah1985,Xia1995,Xia1997,WuXia2006} to
show almost sure upper bounds on the Hausdorff and packing dimension,
as well as lower bounds with positive probability.
\subsection{Notations}

Let $\Torus = [-\pi,\pi]^2$ be the $2$--dimensional torus. For every
$\gamma\in\R$ denote by $H^\gamma_\#(\Torus)$ the Sobolev space of
periodic functions on $\Torus$ with mean zero on $\Torus$, defined in
terms of the complex Fourier coefficients with respect to the Fourier basis
$\{\e^{\im k\cdot x} : k\in\Z^2\}$, as
\[
  H^\gamma_\#(\Torus)
    = \bigl\{(\xi_k)_{k\in\Z^d}\subset\C:
        \xi_0=0,\quad
        \|\xi\|_\gamma^2
    \eqdef\sum_{k\in\Z^d}|k|^{2\gamma}|\xi_k|^2<\infty\bigr\},
\]
with norm $\|\cdot\|_\gamma$. In particular, when $\gamma=0$, we use
the standard notation $L^2_\#(\Torus)$.
Define moreover the spaces of divergence--free vector fields $V_\gamma$ as
\[
  V_\gamma
    = \bigl\{(u_k)_{k\in\Z^2_\star}\subset \C^2:
        k\cdot u_k = 0\text{ for all }k,\ 
        u^i = (u_k^i)_{k\in\Z^d}\in H^\gamma_\#\text{ for i=1,2}\bigr\},
\]
where $\Z^2_\star = \Z^2\setminus\{(0,0)\}$, with norm
$\|u\|_\gamma^2 \eqdef \|u^1\|_\gamma^2 + \|u^2\|_\gamma^2$,
and $u = (u^1,u^2)$. In particular, set $H = V_0$.

Denote by $A$ the realization of the Laplace operator $-\Delta$ on
$L^2_\#(\Torus)$ with periodic boundary conditions. A real orthonormal
basis of $L^2_\#(\Torus)$ (and hence of each $H_\#^\gamma$) of
eigenvectors of $A$ is given as follows. Set
\[
  \Z^2_+
    = \{k\in\Z^2:k_2>0\} 
      \cup \{k\in\Z^2:k_1>0,k_2=0\},
\]
$\Z^2_- = -\Z^2_+$ and $\Z^2_\star = \Z^2_+\cup\Z^2_-$, and
$e_k = \mcref{cc:onb}\sin k\cdot x$ for $k\in\Z^2_+$ and
$e_k = \mcref{cc:onb}\cos k\cdot x$ for $k\in\Z^d_-$,
where $\mcdef{cc:onb} = \sqrt{2}(2\pi)^{-1}$, then \cite{BerMaj2002}
$(e_k)_{k\in\Z^2_\star}$ is an orthonormal basis of $L^2_\#(\Torus)$.
With these positions, given $\gamma\in\R$,
$A^\gamma x = \sum_{k\in\Z^2_\star} |k|^{2\gamma} x_k e_k$
for $x = \sum_k x_k e_k$.
Set $E_k = \tfrac{k^\perp}{|k|}e_k$ for
every $k\in\Z^2_\star$, then $(E_k)_{k\in\Z^2_\star}$ is
an orthonormal basis of $H$. With a slight abuse of notations,
we will also denote by $A$ the realization of the Laplace operator
on $H$.

Given $y\in\R$ and a field $v:\Torus\to\R$, denote by $\levelset_y(v)$
the $y$--level set of $v$, namely $\levelset_y(v) = \{x\in\Torus: v(x) = y\}$.

We denote by $\dim_H$ and by $\dim_P$ the Hausdorff and the packing dimension,
respectively. We refer to \cite{Fal2003} for their definition and
properties.
\section{Formulation of the problem and main results}

\subsection{Formulation of the problem}

Fix $\nu>0$, $\alpha\geq1$ and $M\in\R$. Consider on $\Torus$ the following
stochastic PDE,
\begin{equation}\label{e:spde}
  \dot\theta
      + u\cdot\nabla\theta
    = - \nu A^\alpha\theta
      + \dot\eta,
        \qquad t\geq0,x\in\Torus,
\end{equation}
with periodic boundary conditions and with $\iint_{\Torus}\theta\,dx = 0$,
where $u$ is given as $u = \nabla^\perp A^{-M}\theta$.
By its definition it turns out that
$\Div(u)=0$ and $\iint u(x)\,dx = 0$.  The 2D Navier--Stokes equations
in the vorticity formulation correspond to $M=1$ \cite{BerMaj2002},
the surface quasi--geostrophic to $M=\tfrac12$, while $M=-1$ describes
the large scale flows of a rotating shallow fluid (see \cite{Fal2007a}).
Unfortunately, none of these values can be covered by our results.
\subsubsection*{The non--linearity}

Set for a vector function $v$, with $\Div(v) = 0$, and a scalar
function $f$,
\[
  B(v, f)
    = v\cdot\nabla f
    = \Div(vf).
\]
We use the same notation $B(v,v')$ when $v$ is a vector and the
operator $B$ is understood component--wise, namely
$[B(v,v')]_i = B(v_i,v')$. Set moreover
\[
  B_M(x)
    = B(\nabla^\perp A^{-M} x, x)
\]
We stress two important properties of the non--linear term.
Their proofs are standard using integration by parts arguments.

\begin{lemma}
  For every $x$ and $v$, with $\Div v = 0$,
  \begin{equation}\label{e:B1}
    \scal{x, B(v,x)}_{L^2}
      = 0,
        \qquad\qquad
    \scal{A^{-M}x, B_M(x)}_{L^2}
      = 0.
  \end{equation}
\end{lemma}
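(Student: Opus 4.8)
The plan is to prove both identities by integration by parts, exploiting the divergence-free condition on the advecting vector field. For the first identity, I would write $\scal{x,B(v,x)}_{L^2} = \iint_{\Torus} x\,(v\cdot\nabla x)\,dx$, and observe that $v\cdot\nabla x = \Div(vx)$ (using $\Div v = 0$), so that $x\,(v\cdot\nabla x) = \tfrac12 v\cdot\nabla(x^2) = \tfrac12\Div(v x^2)$. Integrating over $\Torus$ and using the divergence theorem together with the periodic boundary conditions, the boundary term vanishes, giving $\scal{x,B(v,x)}_{L^2} = 0$. One should note that there are no issues with constants, since we work on the flat torus and $v$ is periodic; and the mean-zero condition on $x$ is not even needed here.

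For the second identity, the idea is to reduce it to the first one by an appropriate choice of $v$ and the field being transported. Recall $B_M(x) = B(\nabla^\perp A^{-M}x, x)$, so $\scal{A^{-M}x, B_M(x)}_{L^2} = \iint_{\Torus} (A^{-M}x)\,(\nabla^\perp A^{-M}x)\cdot\nabla x\,dx$. Set $v = \nabla^\perp A^{-M}x$ (which is divergence-free since $\Div\nabla^\perp = 0$) and $\phi = A^{-M}x$. Then $\nabla x = \nabla(A^M\phi)$, but it is cleaner to integrate by parts directly: move the gradient off of $x$ onto $\phi\, v$, using $\Div v = 0$, to get $-\iint_{\Torus} x\,\Div\bigl((A^{-M}x)\,\nabla^\perp A^{-M}x\bigr)\,dx = -\iint_{\Torus} x\, v\cdot\nabla(A^{-M}x)\,dx$. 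Now the remaining integrand is $x\, v\cdot\nabla\phi$ with $\phi = A^{-M}x$; writing $v = \nabla^\perp\phi$, this is $x\,\nabla^\perp\phi\cdot\nabla\phi = 0$ pointwise, since $\nabla^\perp\phi$ is orthogonal to $\nabla\phi$ at every point. Hence the integral vanishes. Alternatively, one can apply the first identity of \eqref{e:B1} with that same $v$ and the scalar field $A^{-M}x$ in place of $x$, which gives $\scal{A^{-M}x, B(v, A^{-M}x)}_{L^2} = 0$; then one checks that $B(v, A^{-M}x)$ and $B_M(x) = B(v,x)$ pair identically against $A^{-M}x$ after one more integration by parts shifting $A^{-M}$ (self-adjoint) across the bracket.

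The computations are entirely routine, so there is no real obstacle; the only point requiring a word of care is the justification of the integration by parts — i.e., that all the fields involved ($x$, $v = \nabla^\perp A^{-M}x$, and their products) are regular enough for the divergence theorem to apply with vanishing boundary contributions. On the torus this is immediate for smooth $x$, and the general case follows by a density argument, approximating $x$ by trigonometric polynomials in the relevant Sobolev norm and passing to the limit in the (continuous) bilinear forms; since the identities are scale-invariant under this approximation, they extend to all $x$ in the natural energy space. The pointwise orthogonality $\nabla^\perp\phi\cdot\nabla\phi = 0$ is the structural reason behind the second identity and is what makes $B_M$ an admissible (energy-preserving) nonlinearity.
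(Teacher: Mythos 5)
Your proof is correct and follows exactly the route the paper indicates (it gives no written proof, stating only that the identities follow from standard integration by parts, which is what you carry out): the first identity via $x\,v\cdot\nabla x=\tfrac12\Div(vx^2)$, the second via one integration by parts and the pointwise orthogonality $\nabla^\perp\phi\cdot\nabla\phi=0$ with $\phi=A^{-M}x$. The only blemish is the vaguely sketched ``alternative'' at the end (pairing $B(v,A^{-M}x)$ against $A^{-M}x$ and ``shifting $A^{-M}$ across the bracket''), which is unnecessary and not fully justified as stated, but your main argument is complete without it.
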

\subsubsection*{The random forcing term}

The random forcing term $\dot\eta$ is modeled as a coloured in space
and white in time Gaussian noise, namely $\eta$ is a Wiener process with
covariance $\cov\in\mathscr{L}(L^2_\#(\Torus))$.
For our purposes, we will assume that $\cov$ has a smoothing effect.
\begin{assumption}[on the covariance]\label{a:noise}
  The operator $\cov$ is positive linear bounded on $L^2_\#(\Torus)$.
  The driving noise is \emph{homogeneous} in space,
  hence $\cov$ has the same eigenvectors of the operator $A$.
  Under these assumptions, there are numbers $(\sigma_k)_{k\in\Z^2_\star}$
  such that for $x = \sum_k x_k e_k\in L^2_\#$,
  \[
    \cov x
      = \sum_{k\in\Z^d_\star}\sigma_k^2 x_k e_k.
  \]
  Assume additionally that there exists $\delta\in(1-\alpha,2-\alpha)$
  such that
  \begin{equation}\label{e:mainass}
    \frac{\mcdef{cc:noise1}}{|k|^\delta}
      \leq |\sigma_k|
      \leq \frac{\mcdef{cc:noise2}}{|k|^\delta}.
  \end{equation}
\end{assumption}
\subsubsection*{The abstract formulation}

In conclusion \eqref{e:spde} can be recast
in its abstract form as
\begin{equation}\label{e:spde_abs}
  d\theta
    + \bigl(\nu A^\alpha\theta + B(u,\theta)\bigr)\,dt
    = \cov^{\frac12}\,dW.
\end{equation}
\subsection{The linear problem}

Our first result gives upper and lower bounds for the dimension
of the level sets of the linear version of the problem under
examination, namely,
\begin{equation}\label{e:z}
  \begin{cases}
    dz + \nu A^\alpha z = \cov^{1/2}\,dW,\\
    z(0) = 0.
  \end{cases}
\end{equation}
\begin{theorem}\label{t:linear}
  Let $\alpha\geq1$ and let Assumption \ref{a:noise} be true.
  For every $t>0$ and $y\in\R$,
  \begin{itemize}
    \item $\Prob[\dim_P(\levelset_y(z_t))\leq 3-\alpha-\delta] = 1$,
    \item $\Prob[\dim_P(\levelset_y(z_t))=\dim_H(\levelset_y(z_t))= 3-\alpha-\delta]>0$.
  \end{itemize}
\end{theorem}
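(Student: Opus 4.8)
The plan is to work directly with the stationary-in-law Gaussian random field $z_t$, which by \eqref{e:z} is given at time $t$ by the explicit stochastic convolution $z_t = \int_0^t e^{-\nu(t-s)A^\alpha}\cov^{1/2}\,dW_s$, hence is a centered Gaussian field on $\Torus$ whose covariance structure is diagonal in the Fourier basis $(e_k)$: the $k$-th coefficient has variance $\sigma_k^2(1-e^{-2\nu t|k|^{2\alpha}})/(2\nu|k|^{2\alpha})$, which by \eqref{e:mainass} is comparable to $|k|^{-2\alpha-2\delta}$ up to $t$-dependent constants. The first step is therefore to establish the two basic regularity facts for a Gaussian field with this spectral decay: (i) a local-nondeterminism / two-sided estimate on the canonical metric $\bigl(\E|z_t(x)-z_t(x')|^2\bigr)^{1/2}$, showing it is comparable to $|x-x'|^{\beta}$ (with a logarithmic correction when the exponent hits an integer) for $\beta = \alpha+\delta-1 \in (0,1)$ — note $\delta\in(1-\alpha,2-\alpha)$ makes $\beta\in(0,1)$ precisely — and (ii) the property of strong local nondeterminism, i.e. a lower bound on the conditional variance $\operatorname{Var}(z_t(x)\mid z_t(x_1),\dots,z_t(x_n))$ in terms of $\min_j|x-x_j|^{2\beta}$. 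Both follow from the homogeneity of $\cov$ and the comparability \eqref{e:mainass}, by the now-standard Fourier-analytic arguments (see e.g. the references \cite{Xia1995,Xia1997,WuXia2006} cited in the introduction).

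Granting these two properties, the dimension of the level set of a $\beta$-Hölder Gaussian field on a $2$-dimensional parameter space is expected to be $2-\beta = 3-\alpha-\delta$. The upper bound $\dim_P(\levelset_y(z_t))\le 3-\alpha-\delta$ almost surely is the easier half: I would cover $\Torus$ by $\sim N^2$ boxes of side $1/N$, use the modulus of continuity of $z_t$ (Kolmogorov continuity / Dudley's bound from property (i)) together with the one-point density bound $\Prob[|z_t(x)-y|\le r]\lesssim r$ (the one-dimensional density of $z_t(x)$ is bounded, uniformly in $x$, since its variance is bounded below), and a first-moment computation to bound the expected number of boxes meeting $\levelset_y(z_t)$ by $\sim N^{2}\cdot N^{-\beta}=N^{3-\alpha-\delta}$; a Borel–Cantelli / box-counting argument then gives the packing-dimension bound almost surely (and a fortiori the Hausdorff bound). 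For the lower bound with positive probability, the strategy is the classical one: on the event $\{z_t(0)$ close to $y\}$ — which has positive probability — construct a random measure supported on $\levelset_y(z_t)$ (an ``occupation/local-time''-type measure, obtained as a weak limit of $\uno_{|z_t(x)-y|<\varepsilon}\,(2\varepsilon)^{-1}\,dx$) and show its $s$-energy $\iint |x-x'|^{-s}\,\mu(dx)\mu(dx')$ has finite expectation for every $s<3-\alpha-\delta$, using the strong local nondeterminism property (ii) to control the two-point density of $(z_t(x),z_t(x'))$ near $(y,y)$; Frostman's lemma then yields $\dim_H(\levelset_y(z_t))\ge s$. Combined with the almost sure upper bound on $\dim_P$ and the general inequality $\dim_H\le\dim_P$, this pins both dimensions to $3-\alpha-\delta$ on a positive-probability event.

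The main obstacle I anticipate is establishing that the local time / occupation measure actually exists and is genuinely supported on the level set, and more specifically that its energy integral is finite: this requires a careful two-point estimate showing that the joint density $p_{x,x'}(y,y)$ of $(z_t(x),z_t(x'))$ is bounded by $C\,|x-x'|^{-\beta}$ (rather than the trivial $|x-x'|^{-2\beta}$), which is exactly where strong local nondeterminism enters and where the precise admissible range $\delta\in(1-\alpha,2-\alpha)$ is used to keep the resulting exponent below $2$. A secondary technical point is the logarithmic correction in the canonical metric when $\alpha+\delta-1$ is exactly an integer — but since $\beta\in(0,1)$ strictly, the only borderline is $\beta=1$, i.e. $\delta=2-\alpha$, which is excluded by the open interval in Assumption \ref{a:noise}, so this is a non-issue here and the clean power-law estimates suffice. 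One should also take a little care that all the constants are uniform in $x$ (homogeneity makes this automatic) and that the positive-probability event for the lower bound is compatible with the almost-sure upper bound, which is immediate since the latter holds on a set of full measure.
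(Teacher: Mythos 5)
Your outline reproduces the paper's strategy in all essentials: the spectral computation showing the canonical metric is comparable to $|x-x'|^{\alpha+\delta-1}$ (the paper's Lemma \ref{l:zprop1}, proved via the sine--sum estimate of Lemma \ref{l:sin}), a H\"older-modulus plus box-counting plus Borel--Cantelli argument for the almost sure packing upper bound, and a Frostman energy argument for the lower bound in which the key input is a two-point density estimate. One small simplification worth noting: you do not need strong local nondeterminism for arbitrarily many points; since the energy method only ever sees pairs $(z_t(x),z_t(x'))$, the needed input is exactly the two-point determinant bound $\det(q_{xx'})\geq \mcref{cc:cov}|x-x'|^{2(\alpha+\delta-1)}$, which the paper proves directly (Lemma \ref{l:cov}) by rearranging the Fourier series and reusing Lemma \ref{l:sin}; this is two-point local nondeterminism and nothing more.

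The one place where your sketch, as written, does not deliver the theorem is the mechanism producing \emph{positive probability} in the lower bound. Conditioning on the event $\{z_t(0)\text{ close to }y\}$ gives you nothing: that event does not guarantee that the level set is hit, let alone that your occupation-type measure survives the limit $\varepsilon\to0$ as a nonzero measure, and finiteness of the expected energy is vacuous if the limiting measure may be zero. The standard repair --- and the paper's argument --- is a Paley--Zygmund/Kahane second-moment step: show that the approximating measures (the paper uses the Gaussian mollifiers $\mu_n = \sqrt{2\pi n}\,\e^{-\frac12 n|z_t(x)-y|^2}dx$, which make the moment computations explicit through characteristic functions; your indicator mollifier would work the same way) satisfy $\E[\mu_n(\Torus)]\geq \mcref{cc:frost1}>0$ uniformly in $n$ --- this is where the uniform lower bound on the one-point variance $\sigma_t^2$ enters, for an arbitrary level $y$ --- and $\E[\mu_n(\Torus)^2]\leq \mcref{cc:frost2}$, so that a subsequential weak limit $\mu$ is nonzero with probability at least $\mcref{cc:frost1}^2\mcref{cc:frost2}^{-1}$, is supported on $\levelset_y(z_t)$ by continuity of $z_t$, and has finite $\gamma$-energy for $\gamma<3-\alpha-\delta$; Frostman's lemma then gives $\dim_H\geq\gamma$ on that positive-probability event, and the a.s.\ upper bound on $\dim_P$ together with $\dim_H\leq\dim_P$ pins both dimensions. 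With that step made explicit your proof coincides with the paper's; without it, the second bullet of the theorem is not established.
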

The proof of the theorem is based on well--known techniques
\cite{Kah1985,Xia1995,Xia1997} for the dimension of level sets.
\begin{remark}\label{r:failed}
  Theorem \ref{t:linear} above says that $\levelset_y(z_t)$ has
  dimension equal $3-\alpha-\delta$ with positive probability.
  It should not be expected that in general this may hold with
  probability one. Indeed, if $y=0$, the set $\levelset_y(z_t)$
  is empty with positive probability, since $z_t$ is continuous
  and defined on a compact set. On the other hand if $y=0$ then
  $\levelset_0(z_t)\neq\emptyset$ with probability one,
  since $z_t$ is non--zero and with zero average. We conjecture
  that $\dim_P(\levelset_0(z_t))=\dim_H(\levelset_0(z_t))
  = 3-\alpha-\delta$ with probability one.
  
  An effective way to prove that a random set has an almost sure
  dimension is to show that the set contains a \emph{limsup
  random fractal} \cite{KhoPerXia2000}. It is easy to construct
  a limsup random fractal contained into $\levelset_0(z_t)$
  by using the sets $\{|z_t(x_k^n)|\leq\epsilon\}$ as
  building blocks, where $(x_k^n)_{k}$ is a dyadic grid.
  Unfortunately, the correlation between distant blocks is too
  strong to apply \cite[Corollary 3.3]{KhoPerXia2000}.
  
  A different approach to prove the conjecture could be based
  on existence and regularity of the occupation density of
  $z_t$ at $0$. First, the random field $z_t$ has an
  occupation density $\ell$ due to \cite[Theorem 22.1]{GemHor1980}
  (see also \cite[Theorem 3]{Pit1978}). Moreover, it is not
  difficult to see that $z_t$ satisfies the property of
  \emph{local $2(\alpha+\delta-1)$--nondeterminism}
  (see \cite{MonPit1987} for the definition, and \cite{Xia2008,Xia2009}
  for a recent account) and so, by \cite[Theorem 4]{Pit1978}
  (or \cite[Theorem 26.1]{GemHor1980}), the occupation 
  density $\ell$ is H\"older continuous. By \cite[Theorem 1]{MonPit1987},
  in order to prove the ``probability one'' statement, it is
  sufficient to show that $\ell(0)>0$ with probability one.
  For instance this is true if there is a random constant
  $\mcdef{cc:fail} = \mcref{cc:fail}(\omega)>0$ such that
  \[
    \text{Leb}_{\Torus}(\{x\in\Torus:\ |z_t(x)|\leq\epsilon\})
      \geq\mcref{cc:fail}\epsilon.
  \]
  We have been not able to prove that $\Prob[\ell(0)>0]=1$.
\end{remark}
\subsection{The non--linear problem}

We extend the results on the dimension of level sets by means of
a absolute continuity result. The random perturbation we consider
is not ``strong'' enough (in terms of regularization) to apply
Girsanov's theorem (which is a standard method when dealing with
non--linear terms of order zero, see for instance
\cite{DalKhoNua2007,DalKhoNua2009}).
We use an absolute continuity theorem of \cite{DapDeb2004}
to translate the dimension results on the linear problem to the
non--linear problem.
We remark that another option to prove the absolute continuity
could be given by the idea in \cite{MatSui2005} (see also
\cite{MatSui2008,Wat2010}).
\begin{theorem}\label{t:main}
  Let $\nu>0$, $\alpha>1$, and $M\geq1$, and assume the
  covariance $\cov$ satisfies Assumption \ref{a:noise}.
  Let $\theta$ be the solution of problem \eqref{e:spde}
  with $\theta(0)\in L^2_\#$, then for every $y\in\R$ and $t>0$,
  \begin{itemize}
    \item $\Prob[\dim_P(\levelset_y(\theta_t))\leq 3-\alpha-\delta] = 1$,
    \item $\Prob[\dim_P(\levelset_y(\theta_t))=\dim_H(\levelset_y(\theta_t))= 3-\alpha-\delta]>0$.
  \end{itemize}
\end{theorem}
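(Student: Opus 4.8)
The plan is to deduce Theorem~\ref{t:main} from Theorem~\ref{t:linear} via an equivalence of laws at a fixed time $t>0$ between the solution $\theta_t$ of the non--linear equation \eqref{e:spde_abs} and the solution $z_t$ of the linear equation \eqref{e:z}. Both conclusions of Theorem~\ref{t:linear} are statements about the law of $z_t$ as a random element of the space of continuous functions on $\Torus$ (the Hausdorff and packing dimensions of $\levelset_y$ are Borel-measurable functionals of a continuous field, since level sets can be approximated through dyadic covers); in particular the event $\{\dim_P \levelset_y \le 3-\alpha-\delta\}$ has probability $1$ and the event $\{\dim_P\levelset_y = \dim_H\levelset_y = 3-\alpha-\delta\}$ has positive probability under the law of $z_t$. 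If the law of $\theta_t$ is equivalent (mutually absolutely continuous) to that of $z_t$, then the first event still has probability $1$ and the second still has positive probability under the law of $\theta_t$, which is exactly the assertion. So the whole content is the absolute-continuity statement.

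To obtain that equivalence I would argue as follows. Write $\theta = z + v$, where $z$ solves the linear problem \eqref{e:z} and $v = \theta - z$ solves the random PDE $\dot v + \nu A^\alpha v = -B(u,\theta)$ with $v(0)=\theta(0)$ and $u = \nabla^\perp A^{-M}\theta$; this $v$ has paths in a smoother space than $z$ because the drift $-B(u,\theta)$ lives in some negative Sobolev space controlled, via the structure of $B_M$ and the hyperdissipation $\alpha>1$, $M\ge1$, by Sobolev norms of $\theta$. The key analytic input is quantitative: using the a priori polynomial moment bounds for the solution $\theta$ (the estimates from \cite{EssSta2010}) together with the regularising action of the semigroup $\e^{-\nu t A^\alpha}$, one shows that for a.e.\ trajectory the ``Cameron--Martin-type'' correction $v_t$ belongs to the reproducing-kernel Hilbert space $\cov^{1/2}A^{-\alpha/2}(\dots)$ associated to the Gaussian law of $z_t$ — concretely, that $z_t$ and $\theta_t$ differ by a sufficiently regular (random) element, with the requisite integrability. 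One then invokes the abstract absolute-continuity criterion of \cite{DapDeb2004} (the weaker substitute for Girsanov, cf.\ Remark~\ref{r:girsanov}) to conclude that $\mathrm{Law}(\theta_t)\ll\mathrm{Law}(z_t)$; running the same argument with the roles exchanged, or using the symmetry of the criterion, gives the reverse absolute continuity and hence equivalence.

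The main obstacle is precisely verifying the hypotheses of the \cite{DapDeb2004} theorem: one must control the nonlinear drift $B(u,\theta)$ in a norm compatible with the noise covariance $\cov$ — here Assumption~\ref{a:noise} with $\delta\in(1-\alpha,2-\alpha)$ is used to pin down how much smoothing $\cov^{1/2}$ provides — and check that the resulting shift lies in the Cameron--Martin space with the integrability the criterion demands. This is where the restrictions $\alpha>1$ and $M\ge1$ enter: the antisymmetry relations \eqref{e:B1} give the energy-type a priori bounds, and $M\ge 1$ makes $u=\nabla^\perp A^{-M}\theta$ one derivative smoother than $\theta$, so that $B(u,\theta)$ loses only controllably many derivatives, while $\alpha>1$ lets the semigroup absorb that loss plus a bit more, yielding the needed extra regularity of $v_t$ over $z_t$. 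Once this regularity-plus-integrability estimate is in place, the transfer of the dimension statements is immediate from the definition of absolute continuity. A closing remark would note that this argument gives equivalence only of the one-time marginals, not of the path laws — but that is all that is needed since Theorem~\ref{t:linear} is itself a fixed-time statement, and this is the point already flagged in the introduction where Girsanov would have given path equivalence but cannot be applied.
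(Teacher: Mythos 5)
Your reduction of Theorem \ref{t:main} to a fixed--time equivalence of the laws of $\theta_t$ and $z_t$, and the observation that the dimension events transfer under mutually absolutely continuous laws, is exactly the paper's strategy. The gap is in how you propose to prove that equivalence. Writing $\theta_t=z_t+v_t$ and showing that the correction $v_t$ lies almost surely in the Cameron--Martin space of the Gaussian law of $z_t$ does \emph{not} yield absolute continuity: $v_t$ is a random shift built from the same noise as $z_t$ (it is neither deterministic nor independent of $z_t$), and the Cameron--Martin theorem does not apply to such shifts. The adapted--random--shift version of this argument is precisely Girsanov's theorem on path space, which is unavailable here (Remark \ref{r:girsanov}: one would need $\alpha>2$). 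Moreover, \cite[Theorem 3.4]{DapDeb2004} is not a ``shift in the reproducing kernel'' criterion that could be fed with such an estimate; it is a statement about the Kolmogorov generators, comparing the resolvents of the linear and nonlinear semigroups, and its output concerns the \emph{invariant} measures, under hypotheses on $\Lambda_t=\cov_{\epsilon,t}^{-1/2}\e^{-tA^\alpha}$ and on domination of the drift by a function in $L^2$ of the invariant measure.

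Concretely, two ingredients are missing from your outline and cannot be bypassed. First, a bridge from invariant measures to one--time laws: the paper proves strong Feller plus irreducibility for the nonlinear semigroup in $H^{-1}_\#$ (Lemma \ref{l:uniq}), and uses the analogous properties of the linear problem \eqref{e:z}, so that for every $t>0$ and every initial condition the law of $\theta_t$ (resp.\ $z_t$) is equivalent to the corresponding invariant measure; only then does equivalence of the two invariant measures give the statement for $\theta(0)\in L^2_\#$ at each fixed time. Second, the verification of the \cite{DapDeb2004} hypotheses is done after the smoothing change of variables $\theta_\epsilon=A^{-\epsilon/2}\theta$, and it requires the improved polynomial moment bound \eqref{e:im_bound} for the invariant measure (Lemma \ref{l:improved}, following \cite{EssSta2010}) to dominate the truncated drifts $B_{M,\epsilon,N}$ in $L^2(\mu_\epsilon)$; in addition, the theorem as stated gives one absolute continuity, and the paper upgrades it to equivalence by an explicit resolvent--identity argument, not by ``exchanging the roles'', which is not symmetric here. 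Without these steps (or the alternative route of \cite{MatSui2005,Wat2010}) the central claim $\mathrm{Law}(\theta_t)\sim\mathrm{Law}(z_t)$ remains unproved, so the proposal as written does not establish Theorem \ref{t:main}.
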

As we shall see in the course of the proof of the above result,
the same holds when $\theta$ is the stationary solution.
\section{Linear results}

In this section we prove Theorem \ref{t:linear}. Existence and uniqueness
for the solution $z$ of \eqref{e:z}, as well as of its invariant measure
and strong mixing are a standard matter, see \cite{DapZab1992}. In the
next lemma we summarize a few results concerning point--wise properties
of $z$ that we will need in the rest of the section.
\begin{lemma}\label{l:zprop1}
  Under the assumptions of Theorem \ref{t:linear}, for every $t>0$
  and $x\in\Torus$, $z(t,x)$ is a centred Gaussian random variable
  such that $\mcref{cc:noise1}\sigma_t^2\leq\E|z(t,x)|^2\leq
  \mcref{cc:noise2}\sigma_t^2$, with $\sigma_t>0$. Moreover,
  there is $g_t:\R\to\R$ such that
  \[
    \E[|z(t,x) - z(t,y)|^2]
      = g_t(x-y),
  \]
  and $\mcdef{cc:z1}|x|^{2(\alpha+\delta-1)}\leq g_t(x)\leq
  \mcdef{cc:z2}|x|^{2(\alpha+\delta-1)}$.
\end{lemma}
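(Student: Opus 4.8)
The plan is to work directly with the spectral representation of the stationary-in-noise solution. Since \eqref{e:z} is linear with additive noise, the mild solution is the Gaussian process $z(t,x) = \sum_{k\in\Z^2_\star}\sigma_k\bigl(\int_0^t \e^{-\nu|k|^{2\alpha}(t-s)}\,d\beta_k(s)\bigr)e_k(x)$, where $(\beta_k)$ are independent standard real Brownian motions (one for each eigenfunction $e_k$). Hence $z(t,\cdot)$ is a centred Gaussian field, and for fixed $t$ the Itô isometry gives $\E|z(t,x)|^2 = \sum_k \sigma_k^2 \,q_t(|k|) \,e_k(x)^2$ with $q_t(|k|) \eqdef \int_0^t \e^{-2\nu|k|^{2\alpha}(t-s)}\,ds = \tfrac{1-\e^{-2\nu|k|^{2\alpha}t}}{2\nu|k|^{2\alpha}}$. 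First I would observe that $e_k(x)^2$ is $\mcref{cc:onb}^2\sin^2(k\cdot x)$ or $\mcref{cc:onb}^2\cos^2(k\cdot x)$; the point is that the \emph{sum over the pair} $\{k,-k\}$ of $\sigma_k^2 q_t(|k|)e_k(x)^2$ collapses, because $\sigma_k=\sigma_{-k}$ and $|k|=|-k|$, to the constant $\mcref{cc:onb}^2\sigma_k^2 q_t(|k|)$ (using $\sin^2+\cos^2=1$). Therefore $\E|z(t,x)|^2 = \mcref{cc:onb}^2\sum_{k\in\Z^2_+}\sigma_k^2 q_t(|k|) \eqdef \sigma_t^2$ is independent of $x$, and the two-sided bound $\mcref{cc:noise1}\sigma_t^2 \le \E|z(t,x)|^2 \le \mcref{cc:noise2}\sigma_t^2$ is then trivially an equality up to renaming; the positivity $\sigma_t>0$ is clear since every term is positive and $q_t>0$ for $t>0$. (If one insists on a genuine two-sided inequality, it comes from the constants in \eqref{e:mainass} before collapsing the trigonometric identity.)

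Next, for the increment, the same computation yields $\E|z(t,x)-z(t,y)|^2 = \sum_k \sigma_k^2 q_t(|k|)\,(e_k(x)-e_k(y))^2$. Summing over the pair $\{k,-k\}$ and expanding $\sin$ and $\cos$ via angle-addition formulas, the cross terms involving $x+y$ cancel and one is left with a function of $x-y$ only: explicitly $\E|z(t,x)-z(t,y)|^2 = 2\mcref{cc:onb}^2\sum_{k\in\Z^2_+}\sigma_k^2 q_t(|k|)\bigl(1-\cos(k\cdot(x-y))\bigr) \eqdef g_t(x-y)$. This establishes the stationarity of the increments and defines $g_t$.

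It remains to prove $\mcref{cc:z1}|x|^{2(\alpha+\delta-1)} \le g_t(x) \le \mcref{cc:z2}|x|^{2(\alpha+\delta-1)}$ for small $|x|$ (and, since $g_t$ is bounded and $\Torus$ has bounded diameter, for all $x$ after adjusting constants). Insert the bounds \eqref{e:mainass}, $\sigma_k^2 \asymp |k|^{-2\delta}$, and note $q_t(|k|) \asymp |k|^{-2\alpha}$ for $|k|\gtrsim t^{-1/2\alpha}$ while $q_t(|k|)\asymp t$ for the finitely many small $|k|$; the small-frequency contribution is smooth and $O(|x|^2)$, which is lower-order since $2(\alpha+\delta-1)<2$ by $\delta<2-\alpha$. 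For the tail, $g_t(x) \asymp \sum_{|k|\gtrsim1}|k|^{-2(\alpha+\delta)}(1-\cos(k\cdot x))$; this is a standard Fourier-series asymptotic — the exponent $2(\alpha+\delta)$ lies in $(2,4)$ by the assumption $\delta\in(1-\alpha,2-\alpha)$, exactly the range in which such a sum behaves like $|x|^{2(\alpha+\delta)-2}=|x|^{2(\alpha+\delta-1)}$ as $|x|\to0$. I would prove the two-sided bound by comparing the sum with the integral $\int_{\R^2}|\xi|^{-2(\alpha+\delta)}(1-\cos(\xi\cdot x))\,d\xi$ via a scaling argument ($\xi\mapsto\xi/|x|$ produces the factor $|x|^{2(\alpha+\delta)-2}$ and a convergent dimensionless integral, convergence at $\infty$ from $2(\alpha+\delta)>2$ and at $0$ from $2(\alpha+\delta)<4$ since $1-\cos\sim\tfrac12(\xi\cdot x)^2$), controlling the sum-versus-integral discrepancy with the uniform-in-frequency bounds on $\sigma_k$ and $q_t$.

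The main obstacle is the last step: making the Fourier-sum asymptotic $\sum_k |k|^{-2(\alpha+\delta)}(1-\cos(k\cdot x)) \asymp |x|^{2(\alpha+\delta-1)}$ rigorous with explicit two-sided constants, and in particular tracking how those constants depend on $t$ (through the crossover scale $t^{-1/2\alpha}$ in $q_t$) so that the stated $\mcref{cc:z1},\mcref{cc:z2}$ are uniform on compact time intervals away from $0$. Everything else — the Gaussianity, the collapse to a function of $x-y$, the reduction to a one-dimensional radial estimate — is routine once the spectral picture is written down.
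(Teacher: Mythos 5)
Your reduction is exactly the paper's: the same spectral representation, the same observation that homogeneity collapses the variance to an $x$-independent $\sigma_t^2$, and the same identity $g_t(x-y)=\sum_k\sigma_k^2 q_t(|k|)\,2\sin^2\bigl(\tfrac12 k\cdot(x-y)\bigr)$ (your $1-\cos$ form). Two remarks on the reduction itself. First, your concern about tracking the crossover scale $t^{-1/(2\alpha)}$ in $q_t$ is a non-issue: on $\Z^2_\star$ one has $|k|\geq1$, so $1-\e^{-2\nu|k|^{2\alpha}t}\geq 1-\e^{-2\nu t}$ uniformly in $k$, and the constants simply depend on $t$, which the statement allows; this is precisely the paper's one-line observation. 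Second, your claim that the lower bound extends ``to all $x$ after adjusting constants'' is not quite right with the Euclidean increment: $g_t$ vanishes when $x-y$ is a nonzero period (e.g. $x-y=(2\pi,0)$), and the correct global statement uses the torus distance, as the paper notes in Remark \ref{r:distance}.

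Where you genuinely diverge is the core estimate, which the paper isolates as Lemma \ref{l:sin}: the two-sided bound for $\sum_{k\in\Z^2_\star}|k|^{-2(\alpha+\delta)}\sin^2(\tfrac12 k\cdot x)$. Your upper bound (split at $|k||x|=1$) is the paper's. For the lower bound the paper does not compare with an integral; it uses subordination, writing $|k|^{-(d+\gamma)}=c\int_0^\infty\e^{-t|k|^2}t^{(d+\gamma-2)/2}dt$ and invoking the pointwise lower bound $\phi(t,0)-\phi(t,x)\gtrsim t^{-d/2}(1\wedge|x|^2/t)$ for the periodic heat kernel. Your proposed ``sum-versus-integral discrepancy'' control is exactly where the difficulty sits and is left unargued: the integrand $|\xi|^{-2(\alpha+\delta)}(1-\cos(\xi\cdot x))$ is oscillatory and direction-dependent, so a naive Riemann-sum comparison does not give a lower bound (the lattice could a priori sit too close to the zero set of $1-\cos(\xi\cdot x)$, and $(k\cdot x)^2$ can be small for $k$ nearly orthogonal to $x$). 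This gap is fixable without the heat kernel: restrict to $1\leq|k|\leq|x|^{-1}$, where $1-\cos(k\cdot x)\geq c\,(k\cdot x)^2$, and use the invariance of $\Z^2_\star$ under rotation by $\pi/2$, which gives $(k\cdot x)^2+(k^\perp\cdot x)^2=|k|^2|x|^2$ and hence $\sum_{|k|\leq|x|^{-1}}|k|^{-2(\alpha+\delta)}(k\cdot x)^2=\tfrac12|x|^2\sum_{|k|\leq|x|^{-1}}|k|^{2-2(\alpha+\delta)}\gtrsim|x|^{2(\alpha+\delta-1)}$, using $2(\alpha+\delta)\in(2,4)$. With that (or the paper's subordination argument) supplied, your proof is complete and is, in substance, the same as the paper's apart from this alternative treatment of the lower bound.
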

\begin{proof}
  We can write $z(t,x)$ as
  \[
    z(t,x)
      = \sum_{k\in\Z^2_\star}\Bigl(\sigma_k\int_0^t
          \e^{-\nu|k|^{2\alpha}(t-s)}\,d\beta_k(s)\Bigr)e_k(x),
  \]
  where $(\beta_k)_{k\in\Z^2_\star}$ are independent standard Brownian motions.
  Given $x\in\mathbb{T}_2$, $t>0$, the real valued random variable $z(t,x)$
  is Gaussian and,
  \[
    \operatorname{Var}(z(t,x))
      = \sum_{k\in\Z^2_\star} \frac{\sigma_k^2e_k(x)^2}{2\nu|k|^{2\alpha}}
          \bigl(1 - \e^{-2\nu|k|^{2\alpha}t}\bigr)
      \approx \sigma_t^2
      = \sum_{k\in\Z^2_\star} \frac{1 - \e^{-2\nu|k|^{2\alpha}t}}
          {4\nu|k|^{2(\alpha+\delta)}}.
  \]
  The expectation of the increments yields
  \[
    \E[|z(t,x) - z(t,y)|^2]
     = \sum_{k\in\Z^2_\star}\frac{\sigma_k^2}{2\nu|k|^{2\alpha}}
         \bigl(1 - \e^{-2\nu|k|^{2\alpha}t}\bigr)\sin^2\frac{k}{2}(x-y)
     = g_t(x-y).
  \]
  Using \eqref{e:mainass} and the fact that $(1 - \e^{-2\nu|k|^{2\alpha}t})$
  is bounded from above and below by constants independent of $k$ (but not $t$),
  we see that
  \[
    g_t(x)
      \approx\sum_{k\in\Z^2_\star}\frac1{|k|^{2\alpha+2\delta}}\sin^2\frac{k}{2}(x-y),
  \]
  hence $g_t(x)\approx|x|^{2(\alpha+\delta-1)}$ by Lemma \ref{l:sin}
  below.
\end{proof}
\begin{lemma}\label{l:sin}
  Let $\gamma>0$. Then for all $x\in\R^d$ with $|x|<1$,
  \[
    \sum_{k\in\Z^d_\star} \frac1{|k|^{d+\gamma}}\sin^2(k\cdot x)
      \sim h_\gamma(x),
  \]
  where $h_2(x) = - |x|^2\log|x|$ and $h_\gamma(x) = |x|^{\gamma\wedge2}$
  otherwise.
\end{lemma}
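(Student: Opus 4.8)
The plan is to estimate the Fourier-type sum $S_\gamma(x) \eqdef \sum_{k\in\Z^d_\star} |k|^{-(d+\gamma)}\sin^2(k\cdot x)$ by splitting the lattice into dyadic shells according to the size of $|k|$ relative to $1/|x|$, using the two elementary bounds $\sin^2(k\cdot x)\leq (k\cdot x)^2\leq |k|^2|x|^2$ for the "low" frequencies and $\sin^2(k\cdot x)\leq 1$ for the "high" frequencies. Concretely, set $R = 1/|x|$ (so $R>1$), write $S_\gamma(x) = \sum_{|k|\leq R} + \sum_{|k|> R}$, and in the first sum bound $\sin^2(k\cdot x)$ by $|k|^2|x|^2$. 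Counting lattice points in annuli $\{2^j \leq |k| < 2^{j+1}\}$ (there are $\asymp 2^{jd}$ of them), the first sum is $\asymp |x|^2\sum_{2^j\leq R} 2^{jd}\cdot 2^{j(2-d-\gamma)} = |x|^2\sum_{2^j\leq R} 2^{j(2-\gamma)}$, and the second is $\asymp \sum_{2^j > R} 2^{jd}\cdot 2^{-j(d+\gamma)} = \sum_{2^j>R} 2^{-j\gamma}$. The second geometric sum always converges (since $\gamma>0$) to $\asymp R^{-\gamma} = |x|^\gamma$. The first sum is where the three regimes appear: if $\gamma<2$ it is a divergent geometric series dominated by its last term $\asymp R^{2-\gamma}$, giving $|x|^2 R^{2-\gamma} = |x|^\gamma$; if $\gamma>2$ it converges and gives $\asymp |x|^2$; if $\gamma=2$ it is $\asymp |x|^2\log R = -|x|^2\log|x|$. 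Adding the two pieces and keeping the dominant one yields exactly $h_\gamma(x)$ as claimed (for $\gamma\leq 2$ the first piece dominates or matches, for $\gamma\geq2$ the low-frequency part is the leading term, with the log correction at $\gamma=2$).

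For the matching lower bound I would not use the crude shell bounds but rather restrict the sum to a sub-collection of $k$ on which $\sin^2(k\cdot x)$ is bounded below. For the regime $\gamma\geq 2$, where the answer is governed by $|x|^2$ up to logs, it suffices to keep the $O(1)$ frequencies: for $|k|$ of order $1/|x|$ or smaller one still has $\sin^2(k\cdot x)\asymp (k\cdot x)^2$ on a positive-density subset of such $k$ (one can choose, say, all $k$ with $|k\cdot x| \le 1$ and $|k\cdot x|\ge \tfrac12|k||x|$ after possibly rotating, or simply note that for a fixed direction there are $\asymp 2^{jd}$ lattice points in the $j$-th annulus with $|k\cdot x|\asymp |k||x|$). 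For $\gamma<2$ the dominant contribution comes from the high frequencies $|k|\asymp 1/|x|$, and there the oscillation $\sin^2(k\cdot x)$ is order one on average over a dyadic block; summing $\sin^2$ over a full annulus $\{N\leq|k|<2N\}$ with $N\asymp 1/|x|$ gives a quantity comparable to the number of points, $\asymp N^d$, because the average of $\sin^2$ over a long arithmetic progression is $\tfrac12 + o(1)$ — this is a standard Weyl-type equidistribution estimate, or can be done even more elementarily by pairing $k$ with a nearby $k'$ so that $\sin^2(k\cdot x) + \sin^2(k'\cdot x)$ is bounded below.

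The main obstacle, and the only genuinely non-routine point, is the critical case $\gamma = 2$ where the logarithm appears: one must show that $\sum_{|k|\leq 1/|x|} |k|^{-d}\sin^2(k\cdot x) \asymp \log(1/|x|)$, which requires summing $\sin^2(k\cdot x)$ over each dyadic annulus and getting a contribution $\asymp 1$ per annulus (not just $\lesssim 1$, which is the easy direction) uniformly across the $\asymp \log(1/|x|)$ annuli between $|k|\asymp 1$ and $|k|\asymp 1/|x|$. For the upper side this is immediate from $\sin^2\leq 1$; for the lower side one needs that $\sum_{2^j\leq|k|<2^{j+1}}\sin^2(k\cdot x) \gtrsim 2^{jd}$ for every $j$ with $2^{j+1}\leq 1/|x|$, which again follows from equidistribution of $\{k\cdot x\}$ over such an annulus but must be verified uniformly in $j$ and $x$. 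I would also remark that by homogeneity and symmetry one may replace $\sin^2$ by its average and reduce to comparing the sum with the integral $\int_{1\le|\xi|\le 1/|x|}|\xi|^{-d}\,d\xi = \omega_{d-1}\log(1/|x|)$, with the annulus-by-annulus oscillation estimates controlling the error; the case $|x|<1$ is exactly what makes $R = 1/|x| > 1$ so that at least one full annulus is present and the estimates are non-vacuous.
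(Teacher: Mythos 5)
Your upper bound is correct and is essentially the paper's: split at $|k|\,|x|\asymp 1$, use $\sin^2(k\cdot x)\le\min(1,|k|^2|x|^2)$, and sum over shells. The gap is in the lower bound, exactly at the point you yourself single out as the crux. For $\gamma=2$ you propose to show $\sum_{2^j\le|k|<2^{j+1}}\sin^2(k\cdot x)\ge c\,2^{jd}$ for every $j$ with $2^{j+1}\le 1/|x|$, by equidistribution of the phases $k\cdot x$. That estimate is false, and equidistribution is unavailable precisely there: on such an annulus every phase satisfies $|k\cdot x|\le 2^{j+1}|x|\le 1$, so the phases do not wrap around the circle at all, and in fact $\sum_{2^j\le|k|<2^{j+1}}\sin^2(k\cdot x)\le C\,2^{j(d+2)}|x|^2$, which is much smaller than $2^{jd}$ when $2^j\ll 1/|x|$. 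Your displayed reduction $\sum_{|k|\le 1/|x|}|k|^{-d}\sin^2(k\cdot x)\asymp\log(1/|x|)$ fails for the same reason (its left-hand side is $O(1)$); for $\gamma=2$ the weight is $|k|^{-(d+2)}$ and the target is $|x|^2\log(1/|x|)$. What actually produces the logarithm is the per-annulus bound $\sum_{2^j\le|k|<2^{j+1}}\sin^2(k\cdot x)\ge c\,2^{j(d+2)}|x|^2$, which follows from the cone-counting observation you already made for the $\gamma\ge 2$ regime (a fixed proportion of each annulus has $|k\cdot x|\ge\tfrac12|k|\,|x|$, and there $|k\cdot x|\le 1$, so $\sin^2(k\cdot x)\ge c\,(k\cdot x)^2$), applied uniformly to all of the $\asymp\log(1/|x|)$ annuli below scale $1/|x|$: each then contributes $\asymp|x|^2$ to the $\gamma=2$ sum. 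The same cone argument, on the single annulus $|k|\asymp 1/|x|$ where the phases are of order one and still below $\pi$, is also what you need for $\gamma<2$; the Weyl heuristic is again inapplicable there (the phases are bounded, so their average $\sin^2$ is not $\tfrac12+o(1)$ for that reason), and the pairing fallback is too lossy, since it only gives average $\sin^2\ge c\,|x|^2$ on that annulus, hence $|x|^{\gamma+2}$ rather than $|x|^\gamma$. Finally, ``after possibly rotating'' is not legitimate (the lattice is not rotation invariant), though the cone count does hold without rotation once $2^j$ exceeds a dimensional constant, and the finitely many low annuli can be handled by taking $k$ equal to the coordinate unit vector corresponding to the largest component of $x$.

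So your dyadic skeleton is reparable, but as written the critical case rests on false estimates. Note that the paper's lower bound avoids lattice-point and equidistribution issues altogether by subordination: writing $|k|^{-(d+\gamma)}=c_\gamma\int_0^\infty \e^{-t|k|^2}t^{(d+\gamma-2)/2}\,dt$ and using $\sin^2=\tfrac12(1-\cos)$ turns the sum into a constant times $\int_0^\infty\bigl(\phi(t,0)-\phi(t,x)\bigr)t^{(d+\gamma-2)/2}\,dt$, where $\phi$ is the mean-zero periodic heat kernel; the Gaussian bound $\phi(t,0)-\phi(t,x)\ge c\,t^{-d/2}\bigl(1\wedge |x|^2/t\bigr)$ and an elementary $t$-integral over $(0,1]$ then yield all three regimes, including the logarithm at $\gamma=2$, in one stroke. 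Either adopt that route, or complete yours with the uniform per-annulus cone bound indicated above.
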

\begin{proof}
  For the upper bound,
  \[
    \sum_{k\in\Z^d_\star} \frac1{|k|^{d+\gamma}}\sin^2\bigl(k\cdot x\bigr)
      \leq \sum_{|k|\cdot|x|\leq1} \frac{|x|^2}{|k|^{d+\gamma-2}}
           + \sum_{|k|\cdot|x|\geq1} \frac1{|k|^{d+\gamma}}
      \leq \memo{s} + \memo{l}.
  \]
  The term \memo{l} can be easily estimated by comparison with an integral,
  yielding $\memo{l}\leq \mconst|x|^\gamma$. For \memo{s} we use the elementary
  result,
  \[
    \sum_{|k|\leq A} \frac1{|k|^\beta}
      \sim
        \begin{cases}
          A^{(d-\beta)_+} & \beta\neq d,\\
          \log A  & \beta=d,\\
        \end{cases}
        \qquad A\text{ large,}
  \]
  to obtain immediately that $\memo{s}\sim h_\gamma(x)$.

  For the lower bound, an elementary computation shows that
  there is $\mcdef{cc:gamma} = \mcref{cc:gamma}(\gamma)>0$ such that,
  \[
    \frac1{|k|^{d+\gamma}}
      = \mcref{cc:gamma}\int_0^\infty \e^{-t|k|^2}t^{\frac{d+\gamma-2}2}\,dt,
  \]
  hence
  \[
    \begin{aligned}
    \sum_{k\in\Z^d_\star} \frac1{|k|^{d+\gamma}}\sin^2\bigl(\tfrac12 k\cdot x\bigr)
      &= \mcref{cc:gamma} \int_0^\infty
           \Bigl(\sum_{k\in\Z^d_\star}\e^{-t|k|^2}\sin^2\bigl(\tfrac12 k\cdot x\bigr)\Bigr)
           t^{\frac{d+\gamma-2}2}\,dt\\
      &= \frac12 \mcref{cc:gamma} \int_0^\infty
           \Bigl(\sum_{k\in\Z^d_\star}\e^{-t|k|^2}(1 - \cos(k\cdot x)\Bigr)
           t^{\frac{d+\gamma-2}2}\,dt\\
      &\geq\frac12 \mcref{cc:gamma} \int_0^1
           \bigl(\phi(t,0) - \phi(t,x)\bigr)
           t^{\frac{d+\gamma-2}2}\,dt,
    \end{aligned}
  \]
  where
  \[
    \phi(t,x)
      = \sum_{k\in\Z^d_\star}\e^{-t|k|^2}\cos(k\cdot x).
  \]
  The $\phi$ is the fundamental solution of the heat equation with periodic
  boundary conditions and mean zero. In particular,
  $\phi(t,0) - \phi(t,x)\geq \mconst t^{-d/2}\bigl(1\wedge|x|^2/t\bigr)$,
  hence
  \[
    \sum_{k\in\Z^d_\star} \frac1{|k|^{d+\gamma}}\sin^2\bigl(\tfrac12 k\cdot x\bigr)
      \geq \int_0^1 \frac{\mconst}{t^{\frac{d}2}}\Bigl(1\wedge\frac{|x|^2}{t}\Bigr)
             t^{\frac{d+\gamma-2}2}\,dt
      \geq \mconst h_\gamma(x),
  \]
  by a direct computation.
\end{proof}
\begin{remark}\label{r:distance}
  If we replace the usual Euclidean distance in the statement of Lemma
  \ref{l:zprop1} and Lemma \ref{l:sin} with the ``torus distance'', namely
  $|x-y|_{\Torus} = \inf_{k\in\Z^2} |x-y+2\pi k|$, the conclusions
  of both lemmata still hold true.
\end{remark}
\begin{remark}
  If $d=1$, the above lemma admits a probabilistic proof, using a Fourier
  series expansion of the fractional Brownian motion. Indeed, by \cite{Igl2005}
  it follows, by simple computations that exploit the explicit form of the covariance
  function of the process, that
  \[
    \sum_{k=1}^\infty \frac1{k^{1+2H}}\sin^2(\tfrac12kt)
      \sim t^{2H}.
  \]
  We have not been able to find a similar proof in the multi--dimensional case.
\end{remark}
\subsection{The upper bound}

The following proposition contains the first part of Theorem \ref{t:linear}.
\begin{proposition}
  Under the assumptions of Theorem \ref{t:linear},
  \[
    \dim_H(\levelset_y(z_t))
      \leq \dim_P(\levelset_y(z_t))
      \leq 3 - \alpha - \gamma.
  \]
\end{proposition}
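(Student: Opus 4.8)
The plan is to control the upper box--counting dimension $\overline{\dim}_B$, since for any bounded set one has $\dim_H(\levelset_y(z_t))\le\dim_P(\levelset_y(z_t))\le\overline{\dim}_B(\levelset_y(z_t))$; thus it is enough to show $\overline{\dim}_B(\levelset_y(z_t))\le 3-\alpha-\gamma$ almost surely. I will obtain this from a first--moment estimate on the number $N_n$ of dyadic cubes of side $r=2^{-n}$ meeting $\levelset_y(z_t)$: once I prove $\E[N_n]\le C\,2^{n(2-\beta)}$ for every $\beta<H\eqdef\alpha+\gamma-1$, Markov's inequality and the Borel--Cantelli lemma give $\overline{\dim}_B(\levelset_y(z_t))\le 2-\beta$ almost surely, and letting $\beta\uparrow H$ along a countable sequence yields $2-(\alpha+\gamma-1)=3-\alpha-\gamma$.

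Two facts from Lemma \ref{l:zprop1} drive the count. First, the one--point variance of $z_t(x)$ is bounded above and below uniformly in $x$, so $z_t(x)$ has a Gaussian density bounded by a constant independent of $x$ and $y$; hence $\Prob[\,|z_t(x)-y|\le\varepsilon\,]\le C\varepsilon$ for all $x\in\Torus$ and $\varepsilon>0$. Second, the increment bound $\E[|z_t(x)-z_t(x')|^2]=g_t(x-x')\approx|x-x'|^{2(\alpha+\gamma-1)}$, together with the Gaussianity of the increments (which promotes the $L^2$ control to $L^p$ control for every $p$), gives through Kolmogorov's continuity criterion a modification of $z_t$ that is $\beta$--Hölder on $\Torus$ for every $\beta<H$. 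I work with this modification throughout.

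For the estimate itself, I partition $\Torus$ into $\approx r^{-2}$ cubes of side $r$ and denote by $x_Q$ the centre of a cube $Q$. Since $Q$ can meet the level set only when $y$ lies between the minimum and the maximum of $z_t$ over $Q$, writing $O_Q=\sup_{x\in Q}|z_t(x)-z_t(x_Q)|$ for the oscillation I get
\[
  \Prob[Q\cap\levelset_y(z_t)\neq\emptyset]
    \le \Prob[\,|z_t(x_Q)-y|\le O_Q\,]
    \le \Prob[\,|z_t(x_Q)-y|\le r^\beta\,]+\Prob[\,O_Q>r^\beta\,].
\]
The first term is at most $C r^\beta$ by the density bound. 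For the second, Dudley's entropy bound gives $\E[O_Q]\le C r^{H}\sqrt{\log(1/r)}$ and the Borell--TIS concentration inequality converts the excess into a Gaussian tail; as $\beta<H$ this makes $\Prob[\,O_Q>r^\beta\,]$ decay faster than any power of $r$, so it is negligible after summation. Summing over the $\approx r^{-2}$ cubes yields $\E[N_n]\le C r^{\beta-2}=C\,2^{n(2-\beta)}$, which is exactly what the first paragraph requires.

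The main obstacle is conceptual as much as technical: mere Hölder regularity cannot bound a level set, because a constant field is Hölder yet has a full--dimensional level set. The genuine input is the uniform bound on the density of $z_t(x)$, which quantifies how the field disperses its values away from $y$. The delicate technical point is the oscillation tail $\Prob[\,O_Q>r^\beta\,]$: one must guarantee that the sub--Gaussian fluctuation of $z_t$ on a cube of side $r$ stays below $r^\beta$ with overwhelming probability, and this is precisely where the strict inequality $\beta<H$ and the Gaussian concentration estimate enter. The remaining Borel--Cantelli bookkeeping and the passage $\beta\uparrow H$ are routine and deliver $\overline{\dim}_B(\levelset_y(z_t))\le 3-\alpha-\gamma$ almost surely, hence the same upper bound for $\dim_P$ and, a fortiori, for $\dim_H$.
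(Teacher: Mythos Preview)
Your proof is correct and follows essentially the same strategy as the paper: a first--moment bound on the number of dyadic cubes meeting $\levelset_y(z_t)$ obtained by splitting $\Prob[Q\cap\levelset_y(z_t)\neq\emptyset]$ into a density term $\Prob[|z_t(x_Q)-y|\le r^\beta]$ and an oscillation term, followed by Markov and Borel--Cantelli. The only difference is in how you control the oscillation tail: you invoke Dudley's entropy bound together with Borell--TIS concentration, while the paper uses that the global H\"older coefficient $L_{\gamma'}$ of $z_t$ has finite polynomial moments (via \cite[Theorem 1.4.1]{Kun1990}) and bounds $\Prob[L_{\gamma'}\ge r^{\beta-\gamma'}]$ by Chebyshev with a high enough moment; both routes make the oscillation term negligible compared to $r^\beta$, so the arguments are interchangeable.
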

\begin{proof}
  We know by Lemma \ref{l:zprop1} and Gaussianity that $z_t$ is
  $\gamma$--H\"older continuous for every $\gamma<\alpha+\delta-1$
  and that the H\"older coefficient $L_\gamma$ has finite polynomial
  moments \cite[Theorem 1.4.1]{Kun1990}. Let $\gamma<\alpha+\delta-1$
  and consider a ball $B_\epsilon(x)$ in $\Torus$, then
  \[
    \Prob[B_\epsilon(x)\cap\levelset_y(z_t)]
      \leq \Prob[y\in z_t(B_\epsilon(x)),\ |z_t(x)-y|\geq\epsilon^{\gamma}]
        + \Prob[|z_t(x)-y|\leq\epsilon^\gamma].
  \]
  On the first event there is $x_y\in B_\epsilon(x)$ such that $z_t(x_y)=y$,
  hence for a $\gamma'$ such that $\gamma<\gamma'<\alpha+\delta-1$, we have that
  $\epsilon^\gamma\leq|z_t(x)-z_t(x_y)|\leq L_{\gamma'}\epsilon^{\gamma'}$,
  hence for $n$ large enough,
  \[
    \Prob[B_\epsilon(x)\cap\levelset_y(z_t)]
      \leq \Prob[L_{\gamma'}\geq\epsilon^{\gamma-\gamma'}]
        + \mconst\epsilon^\gamma
      \leq \epsilon^{n(\gamma'-\gamma)}\E[L_{\gamma'}^n]
        + \mconst\epsilon^\gamma
      \leq \mconst\epsilon^\gamma.
  \]
  Consider now a covering of $\Torus$ of $2^{2k}$ balls $B_k$ of radius $2^{-k}$,
  and let $N_k$ be the smallest number of balls of radius $2^{-k}$ covering
  $\levelset_y(z_t)$. Clearly, $N_k\leq\sum \uno_{\{B_k\cap\levelset_y\neq\emptyset\}}$,
  hence $\E[N_k]\leq \mcdef{cc:box}2^{2k-\gamma k}$. By the first Borel--Cantelli lemma,
  $N_k\leq \mcref{cc:box}2^{2k-\gamma'' k}$ for $k$ large enough, a.s., where $\gamma''<\gamma$.
  Therefore, $\dim_P\levelset_y(z_t)\leq 2-\gamma''$, and the upper bound
  follows by taking $\gamma''\uparrow\alpha+\delta-1$.
\end{proof}
\subsection{The lower bound}

We prove the second part of Theorem \ref{t:linear}. To this end we
first give an estimate of the two--points covariance.
\begin{lemma}\label{l:cov}
  Under the same assumptions of Theorem \ref{t:linear}, let
  $t>0$. Then there is $\mcdef{cc:cov}>0$ such that
  for every $x,x'\in\Torus$, with $x\neq x'$,
  $\det(q_{xx'})\geq\mcref{cc:cov}|x-x'|^{2(\alpha+\delta-1)}$,
  where $q_{xx'}$ is the covariance matrix of $(z_t(x),z_t(x'))$.
\end{lemma}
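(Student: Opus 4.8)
The plan is to estimate the determinant of the $2\times 2$ Gaussian covariance matrix $q_{xx'}$ directly. Writing $q_{xx'} = \begin{pmatrix} a & b \\ b & a' \end{pmatrix}$, where $a = \E[z_t(x)^2]$, $a' = \E[z_t(x')^2]$, and $b = \E[z_t(x)z_t(x')]$, one has the algebraic identity
\[
  \det(q_{xx'})
    = a a' - b^2
    = \E[z_t(x)^2]\,\E\bigl[(z_t(x)-z_t(x'))^2\bigr]
      - \bigl(\E[z_t(x)(z_t(x)-z_t(x'))]\bigr)^2,
\]
which is just the Gram determinant of the two vectors $z_t(x)$ and $z_t(x)-z_t(x')$ in $L^2(\Omega)$, hence equals $\E[z_t(x)^2]\,\E[(z_t(x)-z_t(x'))^2] - \scal{z_t(x),z_t(x)-z_t(x')}_{L^2(\Omega)}^2$. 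By Cauchy--Schwarz this is nonnegative; the point is to get the quantitative lower bound. Alternatively, since $\det(q_{xx'})$ is the product of the eigenvalues and one can compute directly that $\det(q_{xx'})$ equals the conditional variance $\operatorname{Var}(z_t(x')\mid z_t(x))$ times $\E[z_t(x)^2]$, it suffices to bound that conditional variance from below.

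The cleanest route is the following. First, by Lemma \ref{l:zprop1}, $\E[z_t(x)^2]\geq \mcref{cc:noise1}\sigma_t^2 > 0$ uniformly in $x$, so it is enough to show that the conditional variance $D \eqdef \operatorname{Var}\bigl(z_t(x')\mid z_t(x)\bigr) = \E[z_t(x')^2] - b^2/\E[z_t(x)^2]$ satisfies $D \geq \mconst |x-x'|^{2(\alpha+\delta-1)}$. Writing $\rho = \E[z_t(x)^2]$, one has
\[
  D
    = \frac{\E[z_t(x)^2]\E[z_t(x')^2] - b^2}{\rho},
\]
so after multiplying by $\rho$ (which is bounded above and below) the task reduces exactly to the Gram determinant above. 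Now use the expansion $\E[z_t(x)^2]\,\E[z_t(x')^2] - b^2 = \E[z_t(x)^2]\,\bigl(\E[z_t(x')^2]-b\bigr) + b\,\bigl(\E[z_t(x)^2]-b\bigr)$; the first factor $\E[z_t(x')^2]-b = \E[z_t(x')(z_t(x')-z_t(x))]$ and the second $\E[z_t(x)^2]-b = \E[z_t(x)(z_t(x)-z_t(x'))]$. Adding them gives $g_t(x-x')/2$ worth of terms plus cross terms; more precisely
\[
  \E[z_t(x)^2]\E[z_t(x')^2] - b^2
    = \tfrac12\bigl(\E[z_t(x)^2]+\E[z_t(x')^2]\bigr)g_t(x-x')
      - \tfrac14 g_t(x-x')^2,
\]
using $b = \tfrac12(\E[z_t(x)^2]+\E[z_t(x')^2]-g_t(x-x'))$ from the definition of $g_t$. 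Since $\E[z_t(x)^2]+\E[z_t(x')^2]\geq 2\mcref{cc:noise1}\sigma_t^2$ and $g_t(x-x')\leq \mcref{cc:z2}|x-x'|^{2(\alpha+\delta-1)}$ is small when $|x-x'|$ is small, for $|x-x'|\leq r_0$ the quadratic term is dominated by the linear one, giving $\det(q_{xx'})\geq \mconst g_t(x-x') \geq \mconst|x-x'|^{2(\alpha+\delta-1)}$ by the lower bound in Lemma \ref{l:zprop1}. For $|x-x'|$ bounded away from zero on the compact torus, one needs only that $\det(q_{xx'})$ is bounded below by a positive constant there — this follows because $q_{xx'}$ is the covariance of two distinct point evaluations of a Gaussian field with a continuous, strictly positive-definite covariance kernel (nondegeneracy: $z_t(x)=z_t(x')$ a.s.\ would force $g_t(x-x')=0$, contradicting the lower bound), together with continuity of $\det(q_{xx'})$ in $(x,x')$ away from the diagonal and compactness; alternatively invoke Remark \ref{r:distance} and redo the small-distance estimate with the torus distance, which handles all of $\Torus\times\Torus\setminus\text{diagonal}$ at once since $|x-x'|_{\Torus}$ is bounded.

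The main obstacle is the second regime: away from the diagonal one must rule out degeneracy of the pair $(z_t(x),z_t(x'))$ and get a \emph{uniform} positive lower bound, not just positivity for each fixed pair. Using the torus distance version (Remark \ref{r:distance}) circumvents this, because then $g_t(x-x')\asymp |x-x'|_{\Torus}^{2(\alpha+\delta-1)}$ holds for \emph{all} $x\neq x'$ in $\Torus$ (the torus distance never exceeds $\pi\sqrt2$, so the "small distance" regime is really the only regime up to constants), and the quadratic-term-absorption argument above then needs $g_t$ small compared to $\sigma_t^2$, which may fail when $|x-x'|_{\Torus}$ is of order one. In that case one instead notes $\det(q_{xx'}) \geq \tfrac12(\E[z_t(x)^2]+\E[z_t(x')^2])g_t(x-x') - \tfrac14 g_t(x-x')^2 = g_t(x-x')\bigl(\tfrac12(\E[z_t(x)^2]+\E[z_t(x')^2]) - \tfrac14 g_t(x-x')\bigr)$ and checks that the parenthesis stays positive: since $b^2 \le \E[z_t(x)^2]\E[z_t(x')^2]$ by Cauchy--Schwarz with equality only under a.s.\ proportionality (excluded by nondegeneracy), the bracket is automatically $\ge \det(q_{xx'})/g_t(x-x') > 0$, but to make it quantitative one can use $g_t(x-x') = \E[z_t(x)^2]+\E[z_t(x')^2]-2b \le 2(\E[z_t(x)^2]+\E[z_t(x')^2])$ trivially and, for the reverse, that $b \le \sqrt{\E[z_t(x)^2]\E[z_t(x')^2]} \le \tfrac12(\E[z_t(x)^2]+\E[z_t(x')^2])$ strictly, so the bracket is $\ge \tfrac14 g_t(x-x') \wedge \bigl(\tfrac12(\E[z_t(x)^2]+\E[z_t(x')^2]) - b\bigr) > 0$; combined with $g_t(x-x')\asymp |x-x'|_{\Torus}^{2(\alpha+\delta-1)}$ this yields the claim with $\mcref{cc:cov}$ depending on $t$, $\nu$, $\alpha$, $\delta$ and the constants of Assumption \ref{a:noise}.
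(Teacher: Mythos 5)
Your central algebraic identity is not an identity. Writing $a=\E[z_t(x)^2]$, $a'=\E[z_t(x')^2]$, $b=\E[z_t(x)z_t(x')]$ and $g=g_t(x-x')=a+a'-2b$, one gets $aa'-b^2=\tfrac12(a+a')g-\tfrac14 g^2-\tfrac14(a-a')^2$; the term $-\tfrac14(a-a')^2$ is missing from your formula. It vanishes only if the variance of $z_t$ is constant in $x$. That constancy is in fact true here, but only because of spatial homogeneity ($\sigma_k=\sigma_{-k}$, so pairing $k$ with $-k$ gives $\sum_k a_k(t)^2e_k(x)^2$ independent of $x$ --- the same fact the paper exploits as $a_{-k}=a_k$); it does \emph{not} follow from the two-sided bounds of Lemma \ref{l:zprop1}, which is all you invoke. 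The omission is not cosmetic: in general $(a-a')^2\le 2g\,(a+a')$ with near-equality possible, i.e.\ the dropped term can cancel the main term $\tfrac12(a+a')g$ entirely (consistent with $\det(q_{xx'})\ge0$ but with no positive lower bound), so without first establishing $a=a'$ your computation yields nothing.

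The order-one-distance regime is also not closed. Your nondegeneracy remark (``$z_t(x)=z_t(x')$ a.s.\ would force $g_t(x-x')=0$'') only excludes perfect \emph{positive} correlation; degeneracy $b^2=aa'$ can equally occur through $b$ near $-\sqrt{aa'}$, and nothing you write rules this out. Indeed the bracket $\tfrac12(a+a')-\tfrac14 g=\tfrac14(a+a'+2b)=\tfrac14\E[(z_t(x)+z_t(x'))^2]$ tends to $0$ exactly as $b\downarrow-\tfrac12(a+a')$, whereas your claimed bound $\tfrac14 g\wedge\bigl(\tfrac12(a+a')-b\bigr)$ stays bounded away from $0$ in that limit; so that inequality is false, and false precisely in the only case that needs excluding. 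A repair would be to show $\E[(z_t(x)+z_t(x'))^2]=\sum_k a_k(t)^2(e_k(x)+e_k(x'))^2$ is bounded below for $|x-x'|_{\Torus}\ge r_0$ (strict positivity since $\e^{\im k\cdot(x-x')}=-1$ for all $k\in\Z^2_\star$ is impossible, then continuity and compactness). Note that the paper sidesteps both issues: it computes $\det(q_{xx'})$ exactly as $\sum_k A_k\sin^2\bigl(k\cdot\tfrac{x-x'}2\bigr)$ with $A_k=\tfrac12\sum_{m+n=k}a_m(t)^2a_n(t)^2\sim|k|^{-2(\alpha+\delta)}$, and applies Lemma \ref{l:sin}, which gives the bound for all $x\neq x'$ at once, with no case split. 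Your small-distance absorption argument is fine once the two points above are supplied.
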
 
\begin{proof}
  Given $t>0$, define the numbers $a_k(t)$ by
  \[
    \sigma_t(x)^2
      = \sum_{k\in\Z^2_\star} \frac{\sigma_k^2}{2\nu|k|^{2\alpha}}
          \bigl(1 - \e^{-2\nu|k|^{2\alpha}t}\bigr)e_k(x)^2
      = \sum_{k\in\Z^2_\star} a_k(t)^2 e_k(x)^2.
  \]
  If $x,x'\in\Torus$, define $\sigma_t(x,x')$ by
  \[
    \sigma_t(x,x')
      = \E[z_t(x)z_t(x')]
      = \sum_{k\in\Z^2_\star} a_k(t)^2 e_k(x)e_k(x').
  \]
  Then a few elementary computations (using the fact that
  $a_{-k} = a_k$ and the symmetries of $\Z^2_\star$)
  show that
  \[
    \det(q_{xx'})
      = \frac12\sum_{m,n\in\Z^2_\star}a_m(t)^2 a_n(t)^2
          \sin^2\bigl((m+n)\tfrac{x-x'}{2}\bigr)
  \]
  By re--arranging the sum, we finally obtain
  \[
    \det(q_{xx'})
      = \sum_{k\in\Z^2_\star} A_k\sin^2(k\cdot \tfrac{x-x'}{2}),
        \quad\text{where}\quad
    A_k
      = \frac12\sum_{m+n=k;m,n\in\Z^2_\star}a_n(t)^2a_m(t)^2.
  \]
  Since $a_k(t)\sim |k|^{-2(\alpha+\delta)}$ and $\alpha+\delta>1$,
  it is easy to see that $A_k\sim |k|^{-2(\alpha+\delta)}$ and
  \[
    \det(q_{xx'})
      \geq \sum_{k\in\Z^2_\star} \frac{\mconst}
        {|k|^{2 + 2(\alpha+\delta-1)}}
        \sin^2(k\cdot \tfrac{x-x'}{2})
      \geq \mconst|x-x'|^{2(\alpha+\delta-1)}
  \]
  where the last inequality follows from Lemma \ref{l:sin}.
\end{proof}
\begin{proposition}
  Under the assumptions of Theorem \ref{t:linear},
  \[
    \dim_P(\levelset_y(z_t))
      \geq\dim_H(\levelset_y(z_t))
      \geq 3 - \alpha - \delta,
  \]
  with positive probability.
\end{proposition}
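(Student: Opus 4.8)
The plan is to establish the lower bound on the Hausdorff dimension by a standard potential-theoretic argument, using the method of constructing a random measure supported on the level set and controlling its energy. Since packing dimension dominates Hausdorff dimension, it suffices to bound $\dim_H(\levelset_y(z_t))$ from below.

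\textbf{Step 1: Reduce to a positive-probability event via occupation measures.} First I would introduce, for $\epsilon>0$, the random measure $\mu_\epsilon$ on $\Torus$ with density $\tfrac{1}{2\epsilon}\uno_{\{|z_t(x)-y|\leq\epsilon\}}$ with respect to Lebesgue measure. The total mass $\mu_\epsilon(\Torus)$ is a nonnegative random variable; one computes $\E[\mu_\epsilon(\Torus)]=\tfrac{1}{2\epsilon}\int_{\Torus}\Prob[|z_t(x)-y|\leq\epsilon]\,dx$, which by Lemma \ref{l:zprop1} (the variance of $z_t(x)$ is bounded above and below by $\sigma_t^2$) is bounded below by a positive constant uniformly in $\epsilon$. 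To get a lower bound on the mass with positive probability (not just in expectation), I would combine this with the Paley--Zygmund inequality, which requires an upper bound on $\E[\mu_\epsilon(\Torus)^2]$ uniform in $\epsilon$; this second moment is $\tfrac{1}{4\epsilon^2}\iint_{\Torus^2}\Prob[|z_t(x)-y|\leq\epsilon,\ |z_t(x')-y|\leq\epsilon]\,dx\,dx'$, and the joint density of $(z_t(x),z_t(x'))$ at $(y,y)$ is controlled using $\det(q_{xx'})\geq\mcref{cc:cov}|x-x'|_{\Torus}^{2(\alpha+\delta-1)}$ from Lemma \ref{l:cov} together with Remark \ref{r:distance}; since $2(\alpha+\delta-1)<2$ (as $\delta<2-\alpha$), the resulting integral $\iint |x-x'|_{\Torus}^{-(\alpha+\delta-1)}\,dx\,dx'$ converges.

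\textbf{Step 2: Energy estimate and potential-theoretic conclusion.} On the event that $\mu_\epsilon(\Torus)$ is bounded below, I would pass to a subsequential weak limit $\mu$ of $\mu_\epsilon$ (using compactness of probability measures on the compact torus), and argue that $\mu$ is supported on $\levelset_y(z_t)$ — this uses continuity of $z_t$, so that $\{|z_t-y|\leq\epsilon\}$ shrinks to the level set. To apply Frostman's lemma one needs finiteness of the $\beta$-energy $I_\beta(\mu)=\iint |x-x'|_{\Torus}^{-\beta}\,\mu(dx)\,\mu(dx')$ for every $\beta<3-\alpha-\delta=2-(\alpha+\delta-1)$. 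I would estimate $\E[I_\beta(\mu_\epsilon)]\leq\tfrac{1}{4\epsilon^2}\iint |x-x'|_{\Torus}^{-\beta}\,\Prob[|z_t(x)-y|\leq\epsilon,\ |z_t(x')-y|\leq\epsilon]\,dx\,dx'$, bound the joint probability by $\mconst\epsilon^2\det(q_{xx'})^{-1/2}\leq\mconst\epsilon^2|x-x'|_{\Torus}^{-(\alpha+\delta-1)}$, and conclude the integral $\iint |x-x'|_{\Torus}^{-\beta-(\alpha+\delta-1)}\,dx\,dx'$ is finite precisely when $\beta+(\alpha+\delta-1)<2$, i.e.\ $\beta<3-\alpha-\delta$. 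By Fatou this gives $\E[I_\beta(\mu)]<\infty$, hence $I_\beta(\mu)<\infty$ a.s., so on the positive-probability event where additionally $\mu\neq0$ we get $\dim_H(\levelset_y(z_t))\geq\beta$; letting $\beta\uparrow 3-\alpha-\delta$ finishes the proof.

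\textbf{Main obstacle.} The delicate point is coordinating the two positive-probability events — that the limiting measure $\mu$ is nontrivial, and that its energy is finite — so that their intersection still has positive probability. The energy bound holds almost surely (it comes from an $L^1$ bound via Fatou), so intersecting with the Paley--Zygmund event is harmless; the real care is in verifying that the weak limit $\mu$ is genuinely carried by $\levelset_y(z_t)$ and has mass bounded below, which requires a uniform (in $\epsilon$) second-moment bound to keep $\mu_\epsilon(\Torus)$ from escaping to $0$ along the subsequence. One must also be slightly careful that the constants in the Gaussian joint-density estimate depend only on $\det(q_{xx'})$ and not degenerately on $\epsilon$, which is where the nondegeneracy Lemma \ref{l:cov} does its work.
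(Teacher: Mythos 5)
Your proposal is correct and follows essentially the same route as the paper: a Frostman/second-moment argument building random measures concentrated near $\levelset_y(z_t)$, with uniform first and second moment bounds and $\gamma$-energy bounds for $\gamma<3-\alpha-\delta$ obtained from the two-point nondegeneracy of Lemma \ref{l:cov}, followed by a weak limit supported on the level set and a Paley--Zygmund/Kahane argument for positive probability. The only difference is cosmetic: you mollify with the indicator kernel $\tfrac1{2\epsilon}\uno_{\{|z_t(x)-y|\leq\epsilon\}}\,dx$, while the paper (following Wu--Xiao) uses the Gaussian weights $\sqrt{2\pi n}\,\e^{-\frac n2|z_t(x)-y|^2}\,dx$, which merely makes the moment computations explicit via characteristic functions.
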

\begin{proof}
  We use Frostman's ideas, see \cite{Kah1985}.
  To this end, given a non--negative measure $\mu$ on $\Torus$ and a
  number $\gamma>0$, define the $\gamma$--energy of $\mu$ as
  \[
    \|\mu\|_\gamma
      = \int_{\Torus}\int_{\Torus}
        \frac{\mu(dx)\mu(dx')}{|x-x'|^\gamma},
  \]
  where $|\cdot|$ denotes the distance on $\Torus$ (Remark
  \ref{r:distance}). We proceed as in \cite{WuXia2006} (see also
  \cite{Xia1995,Xia1997}) and define the measures
  $\mu_n = \sqrt{2\pi n}\exp({-\frac12n|z_t(x)-y|^2})\,dx$.
  For our purposes, it is sufficient to show that there are
  $\mcdef{cc:frost1},\mcdef{cc:frost2}$ (independent of $n$),
  such that $\E[\mu_n(\Torus)]\geq\mcref{cc:frost1}$,
  $\E[\mu_n(\Torus)^2]\leq\mcref{cc:frost2}$ and
  $\E[\|\mu_n\|_\gamma]<\infty$ for every $\gamma<3-\alpha-\delta$.
  Indeed, by these facts it follows that there is a sub--sequence
  converging to a measure $\mu$. Moreover, $\mu$ is non--zero
  with probability $\mcref{cc:frost1}^2\mcref{cc:frost2}^{-1}$
  (see \cite{Kah1985}). By continuity of $z_t$ (Lemma \ref{l:zprop1},
  it follows that $\mu$ has support in $\levelset_y(z_t)$ and hence
  Frostman's lemma \cite[Theorem 10.3.2]{Kah1985} yields
  $\Prob[\dim_H\levelset_y(z_t)\geq\gamma]\geq
  \mcref{cc:frost1}^2\mcref{cc:frost2}^{-1}$.
  We notice preliminarily that
  \[
    \sqrt{2\pi n}\e^{-\frac12n|z_t(x)-y|^2}
      = \int_\R\e^{-\frac1{2n}u^2+\im u(z_t(x)-y)}\,du.
  \]
  As in \cite{WuXia2006}, simple computations yield,
  \[
  \begin{multlined}
    \mu_n(\Torus)
      =\int_{\Torus}\int_\R \e^{-\frac1{2n}u^2-\im u y}
        \E[\e^{\im uz_t(x)}]\,du\,dx = \\
      =\int_{\Torus}\int_\R \e^{-\im u y}
        \e^{-\frac12(\frac1n+\sigma_t(x)^2)u^2}\,du\,dx
      =\int_{\Torus} \sqrt{\frac{2\pi}{\frac1n+\sigma_t(x)^2}}
        \e^{-\frac{y^2}{2( \frac1n+\sigma_t(x)^2 )}}\,dx\geq\\
      \geq (1+\mcref{cc:z2}^2\sigma_t^2)^{-\frac12}
        \e^{-\frac{y^2}{2\mcref{cc:z1}^2\sigma_t^2}}
      = \mcref{cc:frost1}.
  \end{multlined}
  \]
  With similar computations, involving this time two dimensional Gaussian
  random variables, we see that
  \[
  \begin{multlined}[.9\linewidth]
    \mu_n(dx)\mu_n(dx')
      = \Bigl(\iint_{\R^2}\e^{-\frac12u\cdot g_{xx'}\cdot u^T}
        \e^{-\im y(u_1+u_2)}\,du\Bigr)\,dx\,dx' = \\
      = \frac{2\pi}{\sqrt{\det g_{xx'}}}
        \e^{-\frac12(y,y)\cdot g_{xx'}^{-1}\cdot(y,y)^T}\,dx\,dx'
      \leq \frac {2\pi}{\sqrt{\det g_{xx'}}}\,dx\,dx',
  \end{multlined}
  \]
  where $g_{xx'}=\frac1n I + q_{xx'}$ and $q_{xx'}$ is the covariance
  matrix of $(z_t(x),z_t(x'))$. By Lemma \ref{l:cov},
  $\det(g_{xx'})\geq\det(q_{xx'})\geq\mcref{cc:cov}|x-x'|^{2(\alpha+\beta-1)}$,
  hence $\mu_n(dx)\mu_n(dx')\leq\mconst|x-x'|^{-(\alpha+\beta-1)}$
  and it is immediate to deduce that $\E[\mu_n(\Torus)^2]\leq\mcref{cc:frost2}$.
  Likewise, we deduce that $\E[\|\mu_n\|_\gamma]$ is bounded uniformly
  in $n$ if $\gamma<3-\alpha-\delta$. 
\end{proof}
\section{Non--linear results}

We turn to the proof of Theorem \ref{t:main}. Our strategy
is based on the idea that if $\theta$ is the solution of
\eqref{e:spde_abs} and $z$ of \eqref{e:z}, and if the laws of
$\theta(t)$ and $z(t)$ are equivalent measures, then
Theorem \ref{t:linear} immediately implies Theorem
\ref{t:main}.
\begin{remark}\label{r:girsanov}
  A standard way to prove absolute continuity of laws of solutions
  of stochastic PDEs is the Girsanov transformation. In our
  case, to apply Girsanov's transformation, the quantity
  \[
    \int_0^t \|\cov^{-1/2}B_M(\theta)\|_{L^2}^2\,ds
      <\infty,
        \qquad\Prob-{a.s.}
  \]
  should be finite, at the very least. This happens when $\alpha>2$
  (see \cite[Theorem 3]{MatSui2005}).
\end{remark}
The following theorem could be proved by means of the same method
in \cite[Theorem 2]{MatSui2005}, which is indeed the case
$\alpha>1$, $M=1$ (see also \cite{Wat2010}). Here we present
an alternative proof based on the method introduced in
\cite{DapDeb2004} and on the polynomial moments proved
in Lemma \ref{l:improved}.
\begin{theorem}\label{t:equiv_gt1}
  Let $\alpha>1$, $M\geq1$ and assume the covariance is as in
  Assumption \ref{a:noise}. Let $x\in L^2_\#(\Torus)$,
  $\theta(\cdot;x)$ be the solution of \eqref{e:spde} with initial
  condition $x$ and $z$ be the solution of \eqref{e:z} with
  initial condition $z(0) = 0$. Then for every $t>0$ the law of
  $\theta(t;x)$ is equivalent to the law of $z(t)$.
\end{theorem}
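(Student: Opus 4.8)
The plan is to use the splitting method of \cite{DapDeb2004}: decompose the solution of the non--linear equation as the Ornstein--Uhlenbeck process plus a pathwise smoother remainder, and then invoke the absolute continuity criterion of that paper at the level of the single time $t$. Concretely, write $\theta(t;x) = z(t) + v(t)$, where $z$ solves \eqref{e:z}; since $z(0)=0$, the remainder $v$ solves the random evolution equation
\[
  \dot v + \nu A^\alpha v + B_M(z+v) = 0,
    \qquad v(0) = x\in L^2_\#(\Torus).
\]
Global well--posedness of $v$ (equivalently, of $\theta$) and the basic a priori bounds follow from the cancellations \eqref{e:B1}: testing the equation for $\theta$ against $\theta$, and against $A^{-M}\theta$, one controls $\theta$ in $L^\infty(0,t;L^2_\#(\Torus))\cap L^2(0,t;H^\alpha_\#(\Torus))$ with moments of every order, using the moment estimates of \cite{EssSta2010}.

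The analytic heart of the proof, and the step I expect to be the main obstacle, is to upgrade the regularity of $v(t)$ for $t>0$ --- this is the content of Lemma \ref{l:improved}. The law of $z(t)$ is a centred Gaussian measure $N(0,Q_t)$ whose covariance has eigenvalues comparable to $|k|^{-2(\alpha+\delta)}$ (see the proof of Lemma \ref{l:zprop1}), so its Cameron--Martin space is $Q_t^{1/2}(L^2_\#(\Torus)) = H^{\alpha+\delta}_\#(\Torus)$, whereas $z(t)$ itself only lives in $H^s_\#(\Torus)$ for $s<\alpha+\delta-1$, a full derivative below. One must therefore show that, for $t>0$, $v(t)\in H^{\alpha+\delta+\epsilon_0}_\#(\Torus)$ for some $\epsilon_0>0$, with finite polynomial moments of $\|v(t)\|_{\alpha+\delta+\epsilon_0}$. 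Starting from the mild formulation $v(t) = \e^{-\nu A^\alpha t}x - \int_0^t \e^{-\nu A^\alpha(t-s)}B_M(\theta(s))\,ds$, the first term belongs to every $H^\gamma_\#(\Torus)$ once $t>0$ by parabolic smoothing, and for the Duhamel term one combines the bound $\|\e^{-\nu A^\alpha\tau}f\|_{\gamma+2\beta}\le\mconst\,\tau^{-\beta/\alpha}\|f\|_\gamma$ with a two--dimensional product estimate for $B_M(\theta) = (\nabla^\perp A^{-M}\theta)\cdot\nabla\theta$. The key point is that $M\ge1$ makes the velocity $\nabla^\perp A^{-M}\theta$ gain enough derivatives over $\theta$ to be a bounded multiplier (here $\alpha+\delta>1$ is used), so that $B_M(\theta)$ is only as rough as $\nabla\theta$, i.e.\ it lies in $H^{\alpha+\delta-2-\epsilon}_\#(\Torus)$; recovering $\alpha+\delta+\epsilon_0$ derivatives then requires the kernel singularity exponent, essentially $(2+\epsilon)/(2\alpha)$, to be strictly less than $1$, which is precisely where $\alpha>1$ enters (and why the borderline physical cases lie outside our reach, cf.\ Remark \ref{r:girsanov}). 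Propagating these estimates through Gronwall's inequality, fed by the polynomial moments of $z$ in $H^{\alpha+\delta-1-\epsilon}_\#(\Torus)$ (Gaussianity together with Lemma \ref{l:zprop1}) and by those of $\theta$ from the previous step, yields the claimed moments for $v(t)$.

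With the decomposition in hand, $v(t)$ lies in a strict subspace of the Cameron--Martin space of $\mathcal L(z(t))$ and has finite polynomial moments of its Cameron--Martin norm; this is exactly the hypothesis of the absolute continuity result of \cite{DapDeb2004}, which then yields the equivalence of $\mathcal L(\theta(t;x))$ and $\mathcal L(z(t))$. The stationary solution is treated in the same way, replacing $\theta(\cdot;x)$ by the stationary process, whose initial datum is again in $L^2_\#(\Torus)$. To sum up, the only genuinely delicate point is the quantitative regularisation of $v(t)$ with all polynomial moments: as $\delta\uparrow 2-\alpha$ the noise becomes so rough that the product and smoothing estimates for $B_M$ leave essentially no margin, and it is the balance between the parabolic gain and the integrability of the Duhamel kernel that forces both $\alpha>1$ and $M\ge1$.
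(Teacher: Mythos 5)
Your argument collapses at its final step. You claim that once $v(t)=\theta(t;x)-z(t)$ is shown to lie in the Cameron--Martin space $H^{\alpha+\delta}_\#(\Torus)$ of $\mathcal{L}(z(t))$ with polynomial moments, ``this is exactly the hypothesis of the absolute continuity result of \cite{DapDeb2004}''. It is not, and no theorem of that form exists: the Cameron--Martin theorem gives equivalence under a \emph{deterministic} shift (or a random shift \emph{independent} of the Gaussian), whereas your $v(t)$ is a functional of the very Wiener process driving $z$ and is strongly correlated with $z(t)$. For correlated Cameron--Martin--valued shifts the implication fails outright: in one dimension $V=-Z$ gives $Z+V=0$; in infinite dimensions $V=-\langle Z,e_1\rangle e_1$ takes values in the Cameron--Martin space yet makes the law of $Z+V$ singular with respect to that of $Z$. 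The only standard device converting an adapted random drift into absolute continuity is Girsanov, and Remark \ref{r:girsanov} is precisely the statement that this is out of reach here unless $\alpha>2$. So even granting your regularity claim for $v(t)$ --- which, incidentally, is not the content of Lemma \ref{l:improved}: that lemma gives moments of the \emph{invariant measure} in $H^\gamma_\#$ only for $\gamma<\alpha+\delta-1$, a full derivative \emph{below} the Cameron--Martin space --- the concluding inference has nothing supporting it, and with it the whole proof.

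The actual mechanism in the paper is different in structure. Theorem 3.4 of \cite{DapDeb2004} is a statement about \emph{invariant measures}, proved through the resolvent identity linking the generators of the linear and nonlinear semigroups; its hypotheses concern the Ornstein--Uhlenbeck semigroup (boundedness of $\Lambda_t=\cov_{\epsilon,t}^{-1/2}\e^{-tA^\alpha}$ with a Laplace transform) and the nonlinearity ($B_{M,\epsilon}$ an a.s.\ limit of Lipschitz truncations $B_{M,\epsilon,N}$, dominated by a function in $L^2(\mu)$), not a Cameron--Martin condition on a pathwise difference. Accordingly the proof (i) establishes strong Feller and irreducibility (Lemma \ref{l:uniq}), so that for $t>0$ the law of $\theta(t;x)$ is equivalent to the unique invariant measure, and likewise for $z(t)$, reducing the claim to equivalence of the two invariant measures; (ii) passes to $\theta_\epsilon=A^{-\epsilon/2}\theta$, $z_\epsilon=A^{-\epsilon/2}z$ with $\epsilon\in(2-\alpha-\delta,\alpha-\delta)$ --- this is where $\alpha>1$ enters --- so that the moment bound \eqref{e:im_bound} of Lemma \ref{l:improved} together with the product estimates makes $B_{M,\epsilon}$ square integrable against the invariant measure; and (iii) upgrades absolute continuity to equivalence via the resolvent formula. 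If you want to keep a single-time decomposition argument, you must supply an actual construction of the density, e.g.\ along the lines of \cite{MatSui2005,Wat2010}; regularity of $v(t)$ alone cannot deliver the conclusion.
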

We first state some preliminary results that will be necessary for the
absolute continuity theorem given above. The first result ensures
existence and uniqueness for the solutions of \eqref{e:spde_abs}.
Its proof is quite standard and follows the lines of the proof of
\cite[Theorem 2.9]{Fla2008}.
\begin{lemma}\label{l:exist_uniq}
  Let $\alpha\geq1$ and $M\geq1$, and let Assumption~\ref{a:noise}
  be in force. Given a probability $\mu$ on $H^{-1}_\#(\Torus)$
  with all polynomial moments finite in $H^{-1}_\#(\Torus)$, there
  exists a unique (path--wise) solution $\theta$ of \eqref{e:spde_abs}
  with initial distribution $\mu$ such that
  $\theta\in C([0,\infty);H^{-1}_\#(\Torus))\cap
  L^2_\loc([0,\infty);L^2_\#(\Torus))$. Moreover for every $m\geq1$
  and $T>0$,
  \[
    \begin{gathered}
      \E\Bigl[\sup_{[0,T]}\|\theta(t)\|_{-M}^{2m}\Bigr]
          + \nu\E\Bigl[\int_0^T\|\theta(t)\|_{-M}^{2m-2}\|\theta(t)\|_{\alpha-M}^2\,dt\Bigr]
        <\infty,\\      
      \E\Bigl[\log\Bigl(1 + \sup_{[0,T]}\|\theta(t)\|_{-1}^2
          + \nu\int_0^T\|\theta(t)\|_{L^2}^2\,dt\Bigr)\Bigr]
        <\infty.
    \end{gathered}
  \]
  Denote by $\theta(\cdot;x)$ the solution with initial distribution
  concentrated at $x$. Then the process
  $(\theta(\cdot;x))_{x\in H^{-1}_\#(\Torus)}$
  is a Markov process and the associated transition semigroup is Feller
  in $H^{-1}_\#(\Torus)$.

  If additionally $\mu$ has second moment finite in $L^2_\#(\Torus)$,
  then for every $\gamma<\alpha+\delta-1$,
  \[
    \E\Bigl[\log\Bigl(1 + \sup_{[0,T]}\|\theta(t)\|_{L^2}^2
        + \nu\int_0^T\|\theta(t)\|_\gamma^2\,dt\Bigr)\Bigr]
      <\infty.
  \]
  Moreover, the process $(\theta(\cdot;x))_{x\in L^2_\#(\Torus)}$
  is Feller in $L^2_\#(\Torus)$.
\end{lemma}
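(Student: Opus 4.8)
The plan is to realise the solution via the classical splitting into an Ornstein--Uhlenbeck part and a random, noiseless nonlinear part, in the spirit of \cite[Theorem 2.9]{Fla2008}. Write $\theta=z+v$ with $z$ the solution of the linear problem \eqref{e:z}; by Lemma~\ref{l:zprop1} and a standard factorisation argument, $z$ has $\Prob$--a.s.\ paths in $C([0,T];H^\gamma_\#(\Torus))$ for every $\gamma<\alpha+\delta-1$ (note $\alpha+\delta-1\in(0,1)$), all polynomial moments of $\sup_{[0,T]}\|z(t)\|_\gamma$ are finite by Gaussianity, and $z\in L^2_\loc([0,\infty);L^2_\#)$ since $\delta>1-\alpha$. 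Then $v$ must solve, for $\Prob$--a.e.\ fixed $\omega$, the deterministic parabolic equation
\[
  \dot v+\nu A^\alpha v+B_M(v+z)=0,
    \qquad v(0)=\theta(0),
\]
and I would construct $v$ by a Galerkin scheme on $\Span\{e_k:|k|\le N\}$, obtain bounds uniform in $N$, and pass to the limit; the bounds on $\theta=v+z$ then follow by adding the $z$--contributions controlled above.

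The heart of the matter is a hierarchy of a priori estimates. At the $H^{-M}_\#$ level one tests the equation for $v$ with $A^{-M}v$ and uses the cancellation $\scal{A^{-M}v,B_M(v)}_{L^2}=0$ of \eqref{e:B1}: every surviving term of the nonlinearity then carries at least one factor of $z$ (schematically $\scal{z,B_M(v)}_{L^2}$, $\scal{A^{-M}v,B(\nabla^\perp A^{-M}z,v)}_{L^2}$ and $\scal{A^{-M}v,B_M(z)}_{L^2}$), and these are bounded by trilinear Sobolev--Hölder inequalities, exploiting the $2M\ge2$ smoothing of the transport velocity and Young's inequality to absorb the top derivative of $v$ into the dissipation $\nu\|v\|_{\alpha-M}^2$; the admissible range $\delta\in(1-\alpha,2-\alpha)$, with $\alpha>1$ and $M\ge1$, is precisely what makes the exponents close, and the resulting dependence on $z$ is \emph{additive}, so Gronwall yields
\[
  \sup_{[0,T]}\|v(t)\|_{-M}^{2}+\nu\int_0^T\|v(t)\|_{\alpha-M}^{2}\,dt
    \le C\,e^{CT}\Bigl(\|v(0)\|_{-M}^{2}+\int_0^T\bigl(1+\|z(t)\|_\gamma^{p}\bigr)\,dt\Bigr)
\]
for a suitable $p$; multiplying the basic differential inequality by $\|v(t)\|_{-M}^{2m-2}$ before integrating gives the $2m$--th moment bounds, since $z$ has all polynomial moments. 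Testing instead with $A^{-1}v$ or with $v$ does not annihilate the nonlinearity (the analogue of \eqref{e:B1} at level $H^{-1}$ holds only when $M=1$), and the cross terms with $z$ are borderline: one is left with a differential inequality of \emph{multiplicative} type $\frac{d}{dt}\|v\|^2+(\text{dissipation})\le f(t)\,(1+\|v\|^2)$, where $f$ is a power of $\|z(t)\|_\gamma$, which is $\Prob$--a.s.\ integrable in $t$ but not exponentially so. Gronwall then bounds $1+\sup_{[0,T]}\|v\|^2$ by $(1+\|v(0)\|^2)\exp\bigl(\int_0^T f\bigr)$, whence only its logarithm has finite expectation; the same bookkeeping controls $\nu\int_0^T\|v\|_\alpha^2\,dt$. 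Letting $\gamma\uparrow\alpha+\delta-1$ and using $\|v\|_{\alpha-1}\ge\|v\|_{L^2}$, $\|v\|_{\alpha}\ge\|v\|_\gamma$ (valid since $|k|\ge1$, $\alpha\ge1$ and $\gamma<\alpha$) together with $\|\theta\|_\bullet\le\|v\|_\bullet+\|z\|_\bullet$, one recovers the two logarithmic estimates, the second under the extra hypothesis that $\mu$ has finite second moment in $L^2_\#$ (so that $v(0)\in L^2_\#$).

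With the uniform bounds in hand, $(v_N)$ is bounded in $L^\infty([0,T];H^{-M}_\#)\cap L^2([0,T];H^{\alpha-M}_\#)$ and, reading $\partial_t v_N$ off the equation, in $W^{1,p_0}([0,T];H^{-s_0}_\#)$ for suitable $p_0>1$, $s_0$; the Aubin--Lions lemma provides a subsequence converging strongly in $L^2([0,T];H^{-M-\epsilon}_\#)$, which suffices to pass to the limit in $B_M(v_N+z)$ and to obtain a solution with the asserted regularity, continuity in $H^{-1}_\#$ following from $v\in C([0,T];H^{-1}_\#)$ and $z\in C([0,T];H^\gamma_\#)$. Pathwise uniqueness follows because the difference $w$ of two solutions solves an equation linear in $w$, to which the same $H^{-1}$--energy estimate plus a logarithmic Gronwall applies and forces $w\equiv0$; the analogous continuous--dependence estimate applied to two initial data shows $x\mapsto\theta(t;x)$ is $\Prob$--a.s.\ continuous into $H^{-1}_\#$, and into $L^2_\#$ when $x\in L^2_\#$. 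Markovianity of $(\theta(\cdot;x))_x$ is then standard from pathwise uniqueness and the independence of the increments of $z$, and the Feller property in $H^{-1}_\#$ (resp.\ $L^2_\#$) follows from continuity of the solution map together with bounded convergence.

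The main obstacle is exactly the pair of critical bilinear estimates in the $H^{-1}$ and $L^2$ energy identities: because the forcing is rough, $z$ only lies in $H^\gamma_\#$ with $\gamma<\alpha+\delta-1<1$, so velocity--gradient quantities like $\nabla A^{-M}z$ sit at the Sobolev borderline, and for $M>1$ there is no algebraic cancellation of the transport term against $A^{-1}$; this is why one can only prove logarithmic moments in the stronger norms, and closing even that estimate requires the combined use of the hyperdissipation $\alpha>1$, the extra smoothing $M\ge1$ of the transport velocity, and the admissible window $\delta\in(1-\alpha,2-\alpha)$ for the noise roughness.
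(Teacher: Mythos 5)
Your overall route --- the Ornstein--Uhlenbeck splitting $\theta=z+v$, a pathwise Galerkin construction of $v$, energy estimates at the $H^{-M}$ level via \eqref{e:B1}, weaker (logarithmic) estimates at the $H^{-1}$ and $L^2$ levels, Aubin--Lions compactness, uniqueness and continuous dependence by a Gronwall argument, and Markov/Feller from continuous dependence --- is exactly the standard scheme the paper itself points to (it gives no proof, referring to \cite[Theorem 2.9]{Fla2008}). However, two concrete steps in your a priori estimates are wrong as written, and the first undermines the main quantitative claim, namely the $2m$-th moment bound in $H^{-M}$.

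First, the $H^{-M}$ estimate cannot be additive in $z$. Among the surviving cross terms, $\scal{A^{-M}v, B(\nabla^\perp A^{-M}z, v)}_{L^2}$ is quadratic in $v$ and linear in $z$, so a bound of the form $\epsilon\nu\|v\|_{\alpha-M}^2+C\|v\|_{-M}^2+C(1+\|z\|_\gamma^p)$ with $C$ independent of $z$ is impossible (rescale $v\to\lambda v$, divide by $\lambda^2$, let $\lambda\to\infty$, then rescale $z$); any H\"older/interpolation/Young absorption leaves a term $C\|z\|_\gamma^{q}\|v\|_{-M}^2$, so Gronwall produces a factor $\exp\bigl(C\int_0^T\|z\|_\gamma^{q}\,dt\bigr)$. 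Consequently ``$z$ has all polynomial moments'' does not yield $\E\bigl[\sup_{[0,T]}\|v\|_{-M}^{2m}\bigr]<\infty$: you must integrate an exponential of a power of a Gaussian norm. This is reparable, but it needs an ingredient you do not supply: either apply It\^o's formula to $(1+\|\theta_N\|_{-M}^2)^m$ directly on the Galerkin system, where \eqref{e:B1} annihilates the nonlinearity exactly and Burkholder--Davis--Gundy gives the stated bound whenever $\operatorname{tr}(A^{-M}\cov)=\sum_k\sigma_k^2|k|^{-2M}<\infty$ (e.g.\ when $M\geq\alpha$, since then $\delta+M>1$); or, in the regime where the decomposition is needed, verify that the Young exponent $q$ on the Gaussian norm can be taken strictly less than $2$ (then Gaussian tail bounds give all moments of the exponential); or use the damped process $z_\lambda$ as in the Es-Sarhir--Stannat argument of Lemma \ref{l:improved}. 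Second, your parenthetical claim that testing with $v$ ``does not annihilate the nonlinearity'' unless $M=1$ contradicts the first identity of \eqref{e:B1}: $\scal{x,B(v,x)}_{L^2}=0$ for every divergence--free $v$, hence $\scal{\theta,B_M(\theta)}_{L^2}=0$ for every $M$. The genuine obstruction at the $L^2$ (and $H^{-1}$) level is the roughness of the noise, since $\operatorname{tr}(\cov)=\infty$ because $\delta<2-\alpha\leq1$; after decomposing, the cross terms with $z$ (and, at the $H^{-1}$ level, terms cubic in $v$) admit only multiplicative Gronwall factors, which is the correct reason why only logarithmic moments are asserted there --- your structural conclusion is right, but for the wrong reason.
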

The next preliminary ingredient is to prove that there exists an invariant
measure for problem~\eqref{e:spde_abs} which has all polynomial moments
finite in $L^2$ (and better). This is done following
(almost) \cite{EssSta2010}.
\begin{lemma}\label{l:improved}
  Let $\alpha\geq1$, $M\geq1$ and let Assumption~\ref{a:noise} be true.
  Then there exists an invariant measure $\mu$ for the transition
  semigroup associated to problem~\eqref{e:spde_abs}. Moreover,
  for every $\gamma<\delta+\alpha-1$ and $m\geq1$ there is a
  number $\mcdef{cc:improve}>0$ such that
  \begin{equation}\label{e:im_bound}
    \int \|x\|_{L^2}^{2m-2}\|x\|_\gamma^2\,\mu(dx)
      \leq \mcref{cc:improve}.
  \end{equation}
\end{lemma}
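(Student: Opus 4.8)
The plan is to construct the invariant measure $\mu$ as a time--average of the transition probabilities starting from a fixed point (say the origin), via a Krylov--Bogoliubov argument, and then to obtain the moment bound \eqref{e:im_bound} by deriving a priori estimates on the solution that are uniform in time. First I would apply It\^o's formula to $\|\theta(t)\|_{L^2}^{2m}$ along the solution of \eqref{e:spde_abs}: the transport term drops out because of the cancellation $\scal{x,B(u,x)}_{L^2}=0$ from \eqref{e:B1} (with $u=\nabla^\perp A^{-M}\theta$, which is divergence--free), leaving the dissipative contribution $-2m\nu\|\theta\|_{L^2}^{2m-2}\|\theta\|_\alpha^2$, the It\^o correction $m(2m-1)\|\theta\|_{L^2}^{2m-2}\operatorname{tr}\cov$, and a martingale term. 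Taking expectations, the correction term is controlled by $\operatorname{tr}\cov<\infty$ (finiteness of the trace follows from \eqref{e:mainass}, since $\delta>1-\alpha\geq 0$ forces $2\delta>2(1-\alpha)$; one checks $\sum_k|k|^{-2\delta}<\infty$ fails in general, so more care is needed here and one instead works with a weaker norm — see below). This yields, after an interpolation/Young step to absorb lower--order terms, a bound of the form $\frac{d}{dt}\E\|\theta(t)\|_{L^2}^{2m} \le -c\,\nu\,\E\bigl[\|\theta\|_{L^2}^{2m-2}\|\theta\|_\alpha^2\bigr] + C$, hence by Gronwall a time--uniform bound on $\E\|\theta(t)\|_{L^2}^{2m}$ and, after integrating in time, a time--averaged bound on $\E\bigl[\frac1T\int_0^T\|\theta\|_{L^2}^{2m-2}\|\theta\|_\alpha^2\,dt\bigr]$.

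The subtlety with $\operatorname{tr}\cov$ means one should really run this computation in the $H^{-1}_\#$ norm (or $H^{-M}_\#$), exactly as recorded in Lemma \ref{l:exist_uniq}: there $\sum_k|k|^{-2}|k|^{-2\delta} = \sum_k|k|^{-2-2\delta}$ is finite, and the cancellation $\scal{A^{-M}x,B_M(x)}_{L^2}=0$ from the second identity in \eqref{e:B1} is what kills the nonlinearity in that norm. So the clean route is: (i) obtain time--uniform moments $\sup_t\E\|\theta(t)\|_{-M}^{2m}$ and the associated time--averaged bound on $\E\bigl[\|\theta\|_{-M}^{2m-2}\|\theta\|_{\alpha-M}^2\bigr]$ directly from the first display of Lemma \ref{l:exist_uniq}; (ii) bootstrap these up to the $L^2$--level bound \eqref{e:im_bound} following the scheme of \cite{EssSta2010}, using the smoothing from the fractional Laplacian: the estimate $\|\theta\|_\gamma^2 \lesssim \|\theta\|_{-M}^{1-\vartheta}\|\theta\|_{\alpha-M}^{\ldots}$ by interpolation with a well--chosen $\vartheta$, valid precisely when $\gamma<\alpha+\delta-1$, lets one trade regularity coming from the $\|\theta\|_{\alpha-M}^2$ term against the low--regularity moments, yielding $\int_0^T\E\bigl[\|\theta\|_{L^2}^{2m-2}\|\theta\|_\gamma^2\bigr]\,dt \le C(1+T)$.

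Finally, define $\mu_T = \frac1T\int_0^T \mathcal{L}(\theta(t;0))\,dt$. The tightness of $\{\mu_T\}$ on $H^{-1}_\#$ follows from the uniform moment bound and the compact embedding $L^2_\#\hookrightarrow H^{-1}_\#$ (or $H^\gamma_\#\hookrightarrow H^{-1}_\#$), and the Feller property from Lemma \ref{l:exist_uniq} gives that any weak limit point $\mu$ is invariant. The time--averaged bound from step (ii), being uniform in $T$, passes to the limit by Fatou (lower semicontinuity of $x\mapsto\|x\|_{L^2}^{2m-2}\|x\|_\gamma^2$ under weak convergence, after a standard truncation), yielding exactly \eqref{e:im_bound}. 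The main obstacle I anticipate is step (ii): making the interpolation inequality quantitative enough to close the estimate for all $\gamma<\alpha+\delta-1$ and all $m\ge 1$ simultaneously, while handling the fact that the It\^o correction lives at the $H^{-M}$ level and must be propagated up — this is where the condition $M\ge 1$ and the hyper--dissipation $\alpha\ge 1$ enter, and where one genuinely needs the structure of \cite{EssSta2010} rather than a soft argument.
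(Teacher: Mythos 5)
Your overall skeleton (a priori moment estimates, tightness, invariance of the limit, lower semicontinuity to pass \eqref{e:im_bound} to the limit) matches the paper's limiting argument, and you correctly spot that a direct It\^o computation for $\|\theta\|_{L^2}^{2m}$ is blocked because $\operatorname{tr}\cov=\sum_k\sigma_k^2\approx\sum_k|k|^{-2\delta}=\infty$ (indeed $\delta<2-\alpha\leq1$ always, so the noise does not live in $L^2$). The genuine gap is your step (ii). Interpolation from the $H^{-M}$--level bound of Lemma \ref{l:exist_uniq} cannot produce \eqref{e:im_bound}: writing $\|x\|_{L^2}\leq\|x\|_{-M}^{1-s}\|x\|_{\alpha-M}^{s}$ and $\|x\|_\gamma\leq\|x\|_{-M}^{1-r}\|x\|_{\alpha-M}^{r}$ with $s=M/\alpha$, $r=(\gamma+M)/\alpha$, the quantity $\|x\|_{L^2}^{2m-2}\|x\|_\gamma^2$ requires the power $(2mM+2\gamma)/\alpha$ of the strong norm $\|x\|_{\alpha-M}$, which exceeds $2$ for all large $m$, while the available estimate controls only $\|\theta\|_{-M}^{2m-2}\|\theta\|_{\alpha-M}^{2}$ (second power of the strong norm, weighted by the weak one); no H\"older-in-time manipulation creates the missing powers. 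Moreover the ceiling reachable by such interpolation is $\alpha-M$, which has nothing to do with the threshold $\gamma<\alpha+\delta-1$: that threshold is the spatial regularity of the stochastic convolution and can only enter through an analysis of the noise, not through deterministic interpolation between energy levels. So the ``bootstrap'' as you describe it would fail, and deferring to \cite{EssSta2010} at that point is exactly deferring the entire content of the lemma.

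What the paper actually does at this point is run the Es-Sarhir--Stannat scheme at the $L^2$ level on (Galerkin) \emph{stationary} solutions: introduce the $\lambda$--damped Ornstein--Uhlenbeck process $z_{\lambda,N}$, which satisfies the pathwise bound $\|z_{\lambda,N}(t)\|_\epsilon^2\leq C\lambda^{-2a\beta}M_{a,\epsilon,\beta}(T)^2$ for any $\epsilon<\alpha+\delta-1$ (this is where the threshold comes from); set $\eta=\theta_N-z_{\lambda,N}$, whose equation has no noise term, so the $L^2$ energy estimate for $(1+\|\eta\|_{L^2}^2)^m$ is pathwise and the first cancellation in \eqref{e:B1} removes the quadratic part of the nonlinearity, leaving a term $C\|z\|_\epsilon^2\|\eta\|_{L^2}^2$; the key trick is to choose $\lambda=\lambda_R$ randomly, on the event $\{M_{a,\epsilon,\beta}(T)\leq R\}$, so that this coefficient is at most $\nu/4$ and gets absorbed into the dissipation $\|\eta\|_\alpha^2$ via Poincar\'e --- thereby avoiding the Gronwall exponential $\exp(\int\|z\|_\epsilon^2)$, which has no useful moments. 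Summing over the events $A_R$ with Fernique's theorem, and then using stationarity of $\mu_N$ to write $T\,\E_{\mu_N}[\|x\|_{L^2}^{2m-2}\|x\|_\gamma^2]$ as a time integral and absorbing $\E_{\mu_N}[\|x\|_{L^2}^{2m}]$ by Poincar\'e with a fixed large $T$, gives \eqref{e:im_bound} uniformly in $N$; the limit $\mu$ of the $\mu_N$ then inherits it by semicontinuity. Your Krylov--Bogoliubov framing could in principle replace the stationary-Galerkin bookkeeping, but only after this $\lambda$--damped decomposition and pathwise $L^2$ estimate have been carried out; that is the missing core of your proposal.
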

\begin{proof}
  Consider the Galerkin approximations of \eqref{e:spde_abs}
  \begin{equation}\label{e:spde_approx}
    d\theta_N + (\nu A^\alpha\theta_N + \pi_NB(u_N,\theta_N))\,dt
      = \pi_N\cov^{\frac12}dW,
  \end{equation}
  where $\pi_N$ is the projection onto $\Span[e_k: |k|\leq N]$
  and $u_N = \nabla^\perp A^{-M}\theta_N$.
  It is fairly standard (see \cite{Fla2008}) to prove that for every $N$
  the above system admits an invariant
  measure $\mu_N$. If we are able to prove \eqref{e:im_bound} for
  each $\mu_N$ with a constant $\mcref{cc:improve}$ independent
  of $N$, then the lemma is proved. Indeed, the uniform bound ensures
  tightness of $(\mu_N)_{N\geq1}$ and, consequently, of the laws of
  each of the stationary solution of \eqref{e:spde_approx}
  with initial condition $\mu_N$. The same methods of the previous
  lemma ensure that, up to a sub--sequence, there is a solution of
  \eqref{e:spde_abs} which is limit of stationary laws, hence
  stationary itself. Its marginal $\mu$ at fixed time turns out to
  be an invariant measure for \eqref{e:spde_abs} and a limit point
  of $(\mu_N)_{N\geq1}$. By semi--continuity $\mu$ 
  verifies \eqref{e:im_bound}.

  It remains to prove \eqref{e:im_bound} for the Galerkin system.
  Given $N\geq1$, let $\theta_N$ be the stationary solution of
  \eqref{e:spde_approx} with marginal law $\mu_N$. 

  \emph{Step 1: estimates for the linear part}.
  For every $\lambda>0$ consider the solution $z_{\lambda,N}$
  of the following problem,
  \[
      dz_{\lambda,N}
          + (\nu A^\alpha z_{\lambda,N}
          + \lambda z_{\lambda,N})\,dt
        = \pi_N\cov^\frac12 dW,
  \]
  with initial condition $z_{\lambda,N}(0) = 0$, and recall that
  $W(t) = \sum_{k\in\Z^2_\star} \beta_k e_k$, with
  $(\beta_k)_{k\in\Z^2_\star}$ independent standard Brownian motions.
  Set for every $T>0$, $a\in(0,\tfrac12)$,
  $\epsilon\in(0,\alpha+\delta-1)$ and $\beta\in(0,1)$
  such that $\alpha(1-2a(1-\beta))<(\alpha+\delta-1-\epsilon)$,
  \[
    M_{a,\epsilon,\beta}(T)^2
      = \sum_{k\in\Z^2_\star} \frac1{|k|^{4a\alpha(1-\beta)+2\delta-2\epsilon}}
          \Bigl(\sup_{0\leq s< t\leq T}\tfrac{|\beta_k(t) - \beta_k(s)|}{|t - s|^a}\Bigr)^2.
  \]
  From Proposition 2.1 and Corollary 2.2 of \cite{EssSta2010} we have that
  $\E[M_{a,\epsilon,\beta}(T)^{2m}]\leq\mconst T^{m(1-2a)}$
  and, $\Prob-a.s.$,  $\|z_{\lambda,N}(t)\|_\epsilon^2\leq
  \mcdef{cc:zlambda}\lambda^{-2a\beta}M_{a,\epsilon,\beta}(T)^2$.
  For the rest of the proof fix values of $a$ and $\beta$ as required
  above.

  \emph{Step 2: estimates for the non--linear part}.
  Set $\eta_{\lambda,N} = \theta_N - z_{\lambda,N}$, then
  $\eta_{\lambda,N}$ solves
  \[
    \dot\eta_{\lambda,N}
        + \nu A^{\alpha}\eta_{\lambda,N}
        + \pi_N B_M(\theta_N)
      = \lambda z_{\lambda,N},
  \]
  with initial condition $\eta_{\lambda,N}(0) = \theta_N(0)$.
  For every $m\geq1$,
  \[
    \begin{multlined}[.9\linewidth]
      \frac{d}{dt}(1 + \|\eta_{\lambda,N}\|_{L^2}^2)^m
        = 2m(1 + \|\eta_{\lambda,N}\|_{L^2}^2)^{m-1}
          \bigl(
            - \nu\|\eta_{\lambda,N}\|_\alpha^2 + \\
            - \scal{\eta_{\lambda,N}, B_M(\theta_N)}
            + \lambda\scal{z_{\lambda,N},\eta_{\lambda,N}}
          \bigr),
    \end{multlined}
  \]
  hence using \eqref{e:B1}, H\"older's inequality and Sobolev's embeddings,
  \[
    \scal{\eta_{\lambda,N}, B_M(\theta_N)}
      \leq \frac\nu4\|\eta_{\lambda,N}\|_\alpha^2
           + \mcdef{cc:ineq}\|z_{\lambda,N}\|_\epsilon^4
           + \mcref{cc:ineq}\|z_{\lambda,N}\|_\epsilon^2\|\eta_{\lambda,N}\|_{L^2}^2,
  \]
  where $\epsilon\in(0,\alpha+\delta-1)$ can be chosen arbitrarily
  small (and $\mcref{cc:ineq} = \mcref{cc:ineq}(\epsilon)$, although is
  independent from $N$). Young's inequality and the inequalities
  of the previous step yield,
  \[
    \begin{aligned}
      \lefteqn{\frac{d}{dt}(1 + \|\eta_{\lambda,N}\|_{L^2}^2)^m
          + \frac32\nu m(1 + \|\eta_{\lambda,N}\|_{L^2}^2)^{m-1}\|\eta_{\lambda,N}\|_\alpha^2\leq}\qquad\quad\\
      &\leq \mcdef{cc:young}m(1 + \|\eta_{\lambda,N}\|_{L^2}^2)^{m-1}
          (\lambda^4+\|z_{\lambda,N}\|_\epsilon^4
          + \|z_{\lambda,N}\|_\epsilon^2\|\eta_{\lambda,N}\|_{L^2}^2)\\
      &\leq \mcref{cc:young}m(1 + \|\eta_{\lambda,N}\|_{L^2}^2)^{m-1}
          (\lambda^4+\|z_{\lambda,N}\|_\epsilon^4
          + \mcref{cc:zlambda}\lambda^{-2a\beta}M_{a,\epsilon,\beta}(T)^2
          \|\eta_{\lambda,N}\|_{L^2}^2).
  \end{aligned}
  \]
  Consider $\omega\in\{M_{a,\epsilon,\beta}(T)\leq R\}$ and choose
  $\lambda = \lambda_R$ so that
  \[
    \mcref{cc:young}\mcref{cc:zlambda}R^2\lambda_R^{-2a\beta}
      = \frac{\nu}4,
  \]  
  then by using the Poincaré inequality and again Young's inequality,
  \[
    \frac{d}{dt}(1 + \|\eta_{\lambda,N}\|_{L^2}^2)^m
        + \nu m(1 + \|\eta_{\lambda,N}\|_{L^2}^2)^{m-1}
          \|\eta_{\lambda,N}\|_\alpha^2
      \leq \mcdef{cc:fine}(1 + \lambda_R^{4m}
        + \|z_{\lambda,N}\|_\epsilon^{4m}).
  \]
  Finally, by integrating in $[0,T]$, on the event
  $\{M_{a,\epsilon,\beta}(T)\leq R\}$,
  \begin{equation}\label{e:pathwise}
    \nu m\int_0^T \|\eta_{\lambda_R,N}\|_{L^2}^{2m-2}\|\eta_{\lambda_R,N}\|_\alpha^2\,dt
      \leq (1 + \|\theta(0)\|_{L^2}^2)^m
        + \mcref{cc:fine}\int_0^T(1 + \lambda_R^{4m} + \|z_{\lambda_R,N}\|_{L^2}^{4m})\,dt.
  \end{equation}

  \emph{Step 3: estimates in $\lambda_R$}.
  Define for every integer $R\geq1$ the events
  $A_R = \{R-1 < M_{a,\epsilon,\beta}(T)\leq R\}$.
  By Fernique's theorem (see for instance \cite{DapZab1992}),
  for every $q\geq2$ and every $\epsilon<\alpha+\delta-1$,
  \[
    \E[\|z_{\lambda_R,N}(t)\|_\epsilon^q]
      \leq \mconst\bigl(\E[\|z_{\lambda_R,N}\|_\epsilon^2]\bigr)^{\frac{q}2}
      \leq \mconst\Bigl(\sum_{k\in\Z^2_\star}\frac{|k|^{2\epsilon-2\delta}}
        {\lambda_R + \nu|k|^{2\alpha}}\Bigr)^{\frac{q}2}
      \leq \mconst\lambda_R^{-\frac{q(\alpha+\delta-1-\epsilon)}{2\alpha}},
  \]
  since $\lambda_R + \nu|k|^2\geq\lambda_R\vee(\nu|k|^2)$.
  By our choice of $\lambda_R$, $a$, and $\beta$, if $q\geq2$,
  \begin{equation}\label{e:stima1}
    \E\Bigl[\sum_{R=1}^\infty \uno_{A_R}\int_0^T
        \|z_{\lambda_R,N}(t)\|_\epsilon^q\,dt\Bigr]
      \leq \E\Bigl[\sum_{R=1}^\infty\int_0^T \|z_{\lambda_R,N}(t)\|_\epsilon^q\,dt\Bigr]
      \leq \mconst T.
  \end{equation}
  Likewise,
  \begin{equation}\label{e:stima2}
    \E\Bigl[\sum_{R=1}^\infty \uno_{A_R}\lambda_R^q\Bigr]
      \leq \mconst\E\Bigl[(1+M_{a,\beta}(T))^{\frac{q}{a\beta}}\Bigr]
      \leq \mconst (1 + T^{q\frac{1-2a}{2a\beta}}).
  \end{equation}
  
  Finally, recall \eqref{e:pathwise} and use \eqref{e:stima1}
  and \eqref{e:stima2} to obtain
  \begin{equation}\label{e:stima3}
    \E\Bigl[\sum_{R=1}^\infty\uno_{A_R}\int_0^T
        \|\eta_{\lambda_R,N}\|_{L^2}^{2m-2}\|
        \eta_{\lambda_R,N}\|_\alpha^2\,dt\Bigr]
      \leq \mconst(1 + \E_{\mu_N}[\|x\|_{L^2}^{2m}]
        + T + T^{2m\frac{1-2a}{a\beta}}).
  \end{equation}
  
  \emph{Step 4: conclusion.}
  By assumption $\E_{\mu_N}[\|x\|_{L^2}^{2m}]$ is uniformly bounded
  in $N$ for every $m\geq1$, due to the estimate in $H^{-M}_\#(\Torus)$.
  Fix $\gamma\in(0,\alpha+\delta-1)$, then by \eqref{e:stima1}
  and \eqref{e:stima3},
  \[
    \begin{aligned}
      \lefteqn{T\E_{\mu_N}[\|x\|_{L^2}^{2m-2}\|x\|_\gamma^2]=}\qquad&\\
        &= \E\Bigl[\sum_{R=1}^\infty\uno_{A_R}\int_0^T \|\theta_N(t)\|_{L^2}^{2m-2}\|\theta_N\|_\gamma^2\,dt\Bigr]\\
        &\leq \mconst\E\Bigl[\sum_{R=1}^\infty\uno_{A_R}
          \Bigl(\int_0^T \|z_{\lambda_R,N}(t)\|_\gamma^{2m}\,dt
          + \int_0^T\|\eta_{\lambda_R,N}(t)\|_{L^2}^{2m-2}
          \|\eta_{\lambda_R,N}(t)\|_\alpha^2\,dt\Bigr)\Bigr]\\
        &\leq \mcdef{cc:opt}\bigl(1 + T + T^{2m\frac{1-2a}{a\beta}}
          + \E_{\mu_N}[\|x\|_{L^2}^{2m}]\bigr).
    \end{aligned}
  \]
  The above inequality holds for all $T>0$, hence if we take
  $T = 2\mcref{cc:opt}$ and use the Poincaré inequality,
  we obtain \eqref{e:im_bound} for $\mu_N$ (with a constant
  uniform in $N$).
\end{proof}
Next, we show that problem \eqref{e:spde_abs} has a unique invariant
measure which is strongly mixing. Moreover, the strong Feller property
ensures that the law of $\theta(t)$ is equivalent to the law of the
invariant measure, for every $t>0$. This allows us to reduce
absolute continuity of laws at each time to absolute continuity
of the invariant measures. As a marginal remark, we notice that
the next lemma holds also when $\alpha=1$. The assumption $\alpha>1$
simplifies slightly the proof and it is what we need to prove the
theorem.
\begin{lemma}\label{l:uniq}
  Let $\alpha>1$, $M\geq1$ and let Assumption~\ref{a:noise} be
  true. Then the transition semigroup is strong Feller in
  $H^{-1}_\#(\Torus)$ and problem \eqref{e:spde_abs} admits
  a unique invariant measure which is strongly mixing.
\end{lemma}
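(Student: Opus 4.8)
The plan is to establish the strong Feller property first and then combine it with an irreducibility-type argument (or, more precisely, with the fact that the invariant measure produced in Lemma \ref{l:improved} puts positive mass on every ball) to conclude uniqueness and strong mixing in the standard Doob--Khas'minskii way. For the strong Feller property I would follow the Bismut--Elworthy--Li formula approach, or equivalently the coupling/gradient-estimate method used in \cite{Fla2008} and \cite{EssSta2010} for this class of equations. Concretely: work first with the Galerkin system \eqref{e:spde_approx}, differentiate the flow $x\mapsto\theta_N(t;x)$ with respect to the initial condition, and derive a bound of the form $\|D\theta_N(t;x)h\|_{-1}\le C(t,\|x\|)\|h\|_{-1}$ using the cancellation \eqref{e:B1} together with the hyperdissipation $\alpha>1$ (this is where $\alpha>1$ genuinely simplifies matters: the term $\nu A^\alpha$ dominates the derivative of the nonlinearity $DB_M$ in the relevant norm, so the variation equation is dissipative). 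Then the Bismut formula
\[
  D(P_t\varphi)(x)h
    = \frac1t\,\E\Bigl[\varphi(\theta(t;x))\int_0^t
        \scal{\cov^{-1/2}D\theta(s;x)h,\,dW(s)}\Bigr]
\]
gives $\|D(P_t\varphi)(x)\|\le C(t,\|x\|)\|\varphi\|_\infty$ once one checks that $\cov^{-1/2}D\theta(s;x)h$ is square-integrable in $(s,\omega)$; this is exactly where Assumption \ref{a:noise}, and in particular the lower bound $\sigma_k\ge\mcref{cc:noise1}|k|^{-\delta}$ with $\delta<2-\alpha$, is needed, since it controls $\|\cov^{-1/2}v\|_{L^2}$ by $\|v\|_{2\delta}$ and the gain $\alpha$ from the semigroup $\e^{-\nu A^\alpha s}$ covers the loss $2\delta<2\alpha$. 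Passing to the limit $N\to\infty$, using the bounds from Lemma \ref{l:exist_uniq} and a standard stability estimate for the difference $\theta_N-\theta$, transfers the gradient estimate to the limit semigroup, which yields strong Feller in $H^{-1}_\#(\Torus)$.

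For uniqueness and mixing I would argue as follows. By Lemma \ref{l:improved} there is at least one invariant measure $\mu$, and its polynomial moments in $L^2_\#$ (hence the tightness estimates) are finite. The strong Feller property together with the fact that $\mu$ charges every ball in $H^{-1}_\#$ (which follows because $\mu$ is the limit of the stationary Galerkin laws and the noise is nondegenerate on every Fourier mode, so the support of $\mu$ is all of $H^{-1}_\#$) implies, by the Doob--Khas'minskii theorem (see \cite{DapZab1992}), that $\mu$ is the unique invariant measure and that $P_t^*\nu\to\mu$ in total variation for every initial law $\nu$; in particular the semigroup is strongly mixing. Alternatively one can invoke irreducibility directly: nondegeneracy of $\cov$ on each mode plus an approximate-controllability argument for \eqref{e:spde_abs} (steer $\theta$ near any target in $H^{-1}_\#$ by the linear part, the nonlinearity being subordinate thanks to $\alpha>1$) gives $P_t(x,U)>0$ for every $t>0$, every $x$, and every nonempty open $U$; combined with strong Feller this again yields uniqueness and mixing.

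The main obstacle is the gradient estimate for the variation equation that underlies the strong Feller property. One must show that the linearized flow $D\theta(t;x)$ is well controlled in the $H^{-1}_\#$ norm (or whatever norm makes $\cov^{-1/2}$ bounded on its range) despite the transport nonlinearity $B_M(\theta)=B(\nabla^\perp A^{-M}\theta,\theta)$, whose linearization has two pieces, $B(\nabla^\perp A^{-M}h,\theta)$ and $B(\nabla^\perp A^{-M}\theta,h)$, neither of which enjoys the antisymmetry \eqref{e:B1} when tested against $h$ in an arbitrary norm. The resolution is to exploit $\alpha>1$ and $M\ge1$: the operator $\nabla^\perp A^{-M}$ is smoothing of order $2M-1\ge1$, so in the energy identity for $\|D\theta(t;x)h\|_{-1}^2$ the two bad terms can be absorbed into $\nu\|D\theta h\|_{\alpha-1}^2$ using Sobolev embeddings in two dimensions, at the cost of a factor depending on $\|\theta(t;x)\|_{L^2}$, which is integrable by Lemma \ref{l:exist_uniq}. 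Carrying this Gronwall argument through uniformly in $N$, and then checking the Malliavin-type integrability of $\cov^{-1/2}D\theta h$ against the Brownian integral, is the technical heart of the proof; everything downstream is the classical Doob--Khas'minskii machinery.
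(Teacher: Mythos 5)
Your overall architecture (strong Feller plus irreducibility, then Doob--Khas'minskii) is the same as the paper's, and your Doob step is fine in outline, though the claim that the invariant measure has full support ``because it is a limit of stationary Galerkin laws'' does not follow by itself; the clean route, which the paper takes by reference to Flandoli--Maslowski, is irreducibility of the transition probabilities via controllability, which you also mention as an alternative. The genuine gap is in the strong Feller argument. You propose to apply the Bismut--Elworthy--Li formula directly to the (Galerkin approximation of the) full nonlinear dynamics and assert a gradient bound $\|D\theta(t;x)h\|\le C(t,\|x\|)\|h\|$ because ``$\nu A^\alpha$ dominates $DB_M$''. It does not: the linearization of $B_M$ produces terms of the schematic size $\|\theta\|\,\|\xi\|\,\|\xi\|_\alpha$, and after absorbing $\nu\|\xi\|_\alpha^2$ the energy/Gronwall estimate for $\xi=D\theta(t;x)h$ yields a \emph{random} factor of the form $\exp\bigl(c\int_0^t\|\theta(s)\|^2\,ds\bigr)$, not a deterministic $C(t,\|x\|)$ (you concede this yourself later in the paragraph on the ``main obstacle''). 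To run the BEL formula you then need this exponential to be integrable, i.e.\ exponential moments of the energy, uniformly in $N$; but under the rough forcing of Assumption \ref{a:noise} the paper's Lemma \ref{l:exist_uniq} only provides \emph{logarithmic} moments of $\sup_{[0,T]}\|\theta\|_{-1}^2+\nu\int_0^T\|\theta\|_{L^2}^2\,dt$ (polynomial moments are available only in the weak $H^{-M}$ scale), so the expectation in the BEL formula cannot be controlled and the gradient estimate does not pass to the limit. A minor additional slip: $\cov^{-1/2}$ costs $|k|^{\delta}$, so $\|\cov^{-1/2}v\|_{L^2}\lesssim\|v\|_{\delta}$, not $\|v\|_{2\delta}$.

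The paper avoids exactly this obstruction by localizing: it cuts off the nonlinearity, $B^R_M(x)=\chi_R(\|x\|_\epsilon^2)B_M(x)$, proves strong Feller for the cut-off semigroup $P^R_t$ in $H^\epsilon_\#$ via BEL (there the derivative of the nonlinearity is controlled by $\nu\|\xi\|_\alpha^2+cR^6\|\xi\|_{L^2}^2$, giving a \emph{deterministic} Gronwall constant), and then recovers strong Feller for the true semigroup in the Flandoli--Maslowski manner. A second ingredient you are missing is the passage from $H^\epsilon_\#$ to $H^{-1}_\#$: the paper does this through the Markov property, $P_t\varphi(x)=\E[P_{t/2}\varphi(\theta(t/2;x))]$, together with two pathwise continuous-dependence estimates (an $H^\epsilon$ Gronwall bound and an $H^{-1}$ bound using the cancellation \eqref{e:B1}) showing $\theta(t;x_n)\to\theta(t;x)$ a.s.\ in $H^\epsilon_\#$ when $x_n\to x$ in $H^{-1}_\#$; these only require a.s.\ finiteness of $\int_0^t\|\theta\|_{L^2}^2\,ds$, which is what Lemma \ref{l:exist_uniq} actually supplies. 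To repair your proposal, replace the direct BEL estimate on the full dynamics by the cut-off argument (or otherwise produce exponential moments, which are not available here), and add the $H^\epsilon\to H^{-1}$ upgrading step.
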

\begin{proof}
  Uniqueness and strong mixing follow immediately from Doob's theorem
  (see \cite{DapZab1996}) if we prove that the transition
  semigroup is strong Feller and irreducible in $H^{-1}_\#(\Torus)$.
  The proof is very similar to \cite{FlaMas1995}, we give only a sketch
  and the key estimates.
  
  Preliminarily, we prove the strong Feller property in
  $H^\epsilon_\#(\Torus)$, with $\epsilon>0$ small. 
  Let $\chi:[0,\infty)\to\R$ be a smooth non--increasing
  function such that $\chi\equiv1$ on $[0,1]$ and
  $\chi\equiv0$ on $[2,\infty)$ and set $\chi_R(r)=\chi(r/R^2)$
  and $B^R_M(x) = \chi_R(\|x\|_\epsilon^2)B_M(x)$ for
  $\epsilon\in(0,\alpha+\delta-1)$. The cut--off problem is
  \[
    d\theta_R + (\nu A^{\alpha}\theta_R + B^R_M(\theta_R))\,dt
      = \cov^{\frac12}dW.
  \]
  It is easy to show, as in Lemma \ref{l:exist_uniq}, that the
  above equation has a unique solution in $C([0,\infty);H^\epsilon_\#)$
  for each initial condition in $H^\epsilon_\#(\Torus)$.
  Strong Feller in $H^\epsilon_\#$ follows from the Bismut--Elworthy--Li
  formula \cite{ElwLix1994}, which yields for every bounded 
  measurable $\varphi$ and every $x,h\in H^\epsilon_\#(\Torus)$,
  \[
    \begin{aligned}
      |P_t^R\varphi(x+h) - P_t^R\varphi(x)|
        &\leq \frac{\mconst}{\sqrt{t}}\|\varphi\|_\infty
           \E\Bigl[\Bigl(\int_0^t\|\cov^{-\frac12}
           D_h\theta_R(s;y)\|_{L^2}^2\,ds\Bigr)^{\frac12}\Bigr]\\
        &\leq\frac{\mconst}{\sqrt{t}}\|\varphi\|_\infty
           \E\Bigl[\int_0^t\|D_h\theta_R(s;y)\|_\delta^2\,ds\Bigr]^{\frac12},
    \end{aligned}
  \]
  where $(P_t^R)_{t\geq0}$ is the transition semigroup corresponding to
  $\theta_R$ and $D_h\theta_R(\cdot;y)$ is the Gateaux derivative (with
  respect to the initial condition) of $\theta_R$ in the direction $h$.
  Let $\xi(s;y) = D_h\theta_R(s;y)$, then it is sufficient to compute
  the ``energy estimate'' of $\xi$ in $L^2(\Torus)$, and the ``dissipative''
  term will provide the estimate we need. Clearly, the most troublesome term
  is the non--linearity, which is estimated as follows,
  \[
    \begin{aligned}
      \scal{DB_R(\theta_R)\xi, \xi}_{L^2}
        &\leq\mconst\chi_R'(\|\theta_R\|_\epsilon^2)
          \|\theta_R\|_\epsilon^3\|\xi\|_{L^2}\|\xi\|_\alpha 
          + \mconst\chi_R(\|\theta_R\|_\epsilon^2)
          \|\theta_R\|_\epsilon\|\xi\|_{L^2}\|\xi\|_\alpha\\
        &\leq \nu\|\xi\|_\alpha^2 + \mconst R^6\|\xi\|_{L^2}^2.
    \end{aligned}
  \]
    
  Next, we prove strong Feller in $H^{-1}_\#$, following an idea in
  \cite{Rom10a}. We know that if $\varphi:H^{-1}_\#(\Torus)\to\R$
  is bounded measurable, then $P_t\varphi\in C_b(H^\epsilon_\#)$,
  and we want to prove that $P_t\varphi\in C_b(H^{-1}_\#)$.
  To this end, let $x_n\to x$ in $H^{-1}_\#$, then it is sufficient
  to show that $\theta(t;x_n)\to\theta(t;x)$ {a.s.} in $H^\epsilon_\#$,
  since by the Markov property and the Lebesgue theorem,
  $P_t\varphi(x_n) = \E[P_{t/2}\varphi(\theta(t/2;x_n))]$ converges
  to $\E[P_{t/2}\varphi(\theta(t/2;x))] = P_t\varphi(x)$.

  Set $w_n(t) = \theta(t;x_n) - \theta(t;x) = \theta_n - \theta$,
  and choose $\epsilon$ small enough so that $\epsilon+1-\alpha\leq0$,
  then the energy inequality in $H^\epsilon_\#$ yields
  \[
    \begin{aligned}
    \frac{d}{dt}\|w_n\|_\epsilon^2
         + 2\nu\|w_n\|_{\alpha+\epsilon}^2
      &= - 2\scal{A^\epsilon w_n,B_M(\theta_n) - B_M(\theta)}\\
      &\leq \nu\|w_n\|_{\alpha+\epsilon}^2
         + \mcdef{cc:sf}(\|\theta_n\|_{L^2}^2 + \|\theta\|_{2-2M+\epsilon}^2)\|w_n\|_{L^2}^2,
    \end{aligned}
  \]
  hence by Gronwall's lemma, for every $s\leq t$,
  \[
    \|w_n(t)\|_\epsilon^2
      \leq \|w_n(s)\|_\epsilon^2\e^{\mcref{cc:sf}\int_s^t(\|\theta_n\|_{L^2}^2 + \|\theta\|_{2-2M+\epsilon}^2)}
      \leq \|w_n(s)\|_\epsilon^2\e^{\mcref{cc:sf}\int_0^t(\|\theta_n\|_{L^2}^2 + \|\theta\|_{2-2M+\epsilon}^2)}.
  \]
  Notice that the exponential term is $\Prob$--{a.~s.} finite by
  the bounds in Lemma \ref{l:exist_uniq}. Integrate the above inequality
  for $s\in[0,t]$ to get
  \[
    \|w_n(t)\|_{L^2}^2
      \leq \frac1{t}\Bigl(\int_0^t\|w_n(s)\|_{L^2}^2\,ds\Bigr)\e^{\int_0^t\|\theta\|_{L^2}^2\,ds}.
  \]
  So it is sufficient to prove that $\int_0^t\|w_n(s)\|_{L^2}^2\,ds\to0$.
  Use \eqref{e:B1}, H\"older's inequality and Sobolev's
  embeddings on the energy inequality in $H^{-1}_\#$ for $w_n$,
  to get
  \[
    \begin{aligned}
      \frac{d}{dt}\|w_n\|_{-1}^2 + 2\nu\|w_n\|_{\alpha-1}^2
        &= -\scal{w_n, B(\nabla^\perp A^{-M}\theta_n, w_n) + B(\nabla^\perp A^{-M}w_n, \theta)}_{-1}\\
        &\leq \nu\|w_n\|_{L^2}^2 
          + \mcdef{cc:gron}\|\theta\|_{L^2}^2\|w_n\|_{-1}^2.
    \end{aligned}
  \]
  The Gronwall inequality finally yields
  \[
    \|w_n(t)\|_{-1}^2 + \nu\int_0^t \|w_n(s)\|_{L^2}^2\,ds
      \leq \|x_n - x\|_{-1}\e^{\mcref{cc:gron}\int_0^t\|\theta\|_{L^2}^2\,ds},
  \]
  and the right hand side converges to $0$, $\Prob$--{a.~s.}, since
  $x_n\to x$ in $H^{-1}_\#$.
  
  Similar computations also yield irreducibility as in \cite{FlaMas1995}
  (see also \cite{Fer1997,Fer1999}).
\end{proof}
We have all elements to prove Theorem \ref{t:equiv_gt1}.
Since both problems \eqref{e:z} and \eqref{e:spde_abs} satisfy
the strong Feller property and have irreducible transition
probabilities, the law of each process at some time $t>0$ and
the corresponding invariant measure are equivalent measures.
It is then sufficient to show equivalence of the invariant
measures. To this end choose $\epsilon\in(0,1)$ such that
$\epsilon>2-\alpha-\delta$ and $\epsilon<\alpha-\delta$
(which is possible since $\alpha>1$),
and let $\theta_\epsilon=A^{-\epsilon/2}\theta$ and
$z_\epsilon=A^{-\epsilon/2}z$. The new process
$\theta_\epsilon$ solves
\begin{equation}\label{e:spdepsilon}
  d\theta_\epsilon
      + \nu A^\alpha\theta_\epsilon
      + B_{M,\epsilon}(\theta_\epsilon)
    = \cov_\epsilon^{\frac12}\,dW,
\end{equation}
where $B_{M,\epsilon}(x)=A^{-\epsilon/2}B_M(A^{\epsilon/2}x)$
and $\cov_\epsilon=A^{-\epsilon}\cov$, and similarly for
$z_\epsilon$. We will prove equivalence of the invariant measures
of $\theta_\epsilon$ and $z_\epsilon$ using
\cite[Theorem 3.4]{DapDeb2004} on $\theta_\epsilon$ and
$z_\epsilon$.
\begin{proof}[Proof of Theorem \ref{t:equiv_gt1}]
  Before checking that the assumptions of the aforementioned
  theorem are satisfied, we show how to deduce the equivalence
  of measures. Indeed, the theorem in \cite{DapDeb2004} shows
  the following formula
  \[
    (\lambda - L_\epsilon)^{-1}
      = (\lambda - N_\epsilon)^{-1}
        - (\lambda - N_\epsilon)^{-1}
        \scal{B_{M,\epsilon}, D(\lambda - L_\epsilon)^{-1}},
      \qquad\lambda>0,
  \]
  where $L_\epsilon$ and $N_\epsilon$ are the generators of
  the Markov semigroups associated to $z_\epsilon$ and
  $\theta_\epsilon$, respectively. To prove equivalence
  it is sufficient to show that for every bounded measurable
  function $f$ on $L^2_\#$, $(\lambda - L_\epsilon)^{-1}f=0$
  if and only if $(\lambda - N_\epsilon)^{-1}f=0$.
  If $(\lambda - L_\epsilon)^{-1}f=0$, the conclusion is
  immediate (it is already in \cite{DapDeb2004}). Assume
  that $(\lambda - N_\epsilon)^{-1}f = 0$ and set
  $\phi_\lambda = (\lambda - L_\epsilon)^{-1}f=0$. By
  the formula above,
  \[
    \phi_\lambda
        + (\lambda - N_\epsilon)^{-1}
        \scal{B_{M,\epsilon}, D\phi_\lambda}
      =0.
  \]
  Clearly, $\phi_\lambda\in D(L_\epsilon)$ and, by
  Proposition 2.4 of \cite{DapDeb2004}, $\phi_\lambda\in C^1_b$.
  Therefore $\phi_\lambda\in D(N_\epsilon)$ and
  $(\lambda - N_\epsilon)\phi_\lambda = (\lambda -
  L_\epsilon)\phi_\lambda - \scal{B_{M,\epsilon},D\phi_\lambda}$.
  By the formula above it follows that
  $(\lambda-L_\epsilon)\phi_\lambda=0$, that is $\phi_\lambda=0$.
  This prove the equivalence.

  We turn to the verification of the assumptions of Theorem 3.4
  of \cite{DapDeb2004}. The assumptions for the
  linear problem \cite[Hypothesis 2.1]{DapDeb2004} are standard
  and can be verified as in Section 4 of the mentioned paper. We
  only give a few details on the fourth hypothesis, namely that
  $\|\Lambda_t\|$ admits Laplace transform defined on $(-1,\infty)$,
  where $\Lambda_t = \cov_{\epsilon,t}^{-1/2}\e^{-tA^\alpha}$ and
  \[
    \cov_{\epsilon,t}
      = \int_0^t \e^{-sA^\alpha}\cov_\epsilon\e^{-s(A^\alpha)^*}\,ds.
  \]
  By our assumptions each $e_k$ (the sine--cosine orthonormal basis)
  is an eigenvector of $\Lambda_t$. Denote by $\lambda_{t,k}$ the
  corresponding eigenvalue. Elementary computations yield
  \[
    \lambda_{t,k}
      = \Bigl(|\sigma_k|^2\frac{1-\e^{-2t|k|^{2\alpha}}}
        {2|k|^{2(\alpha+\epsilon)}}\Bigr)^{-\frac12}\e^{-t|k|^{2\alpha}}
      \sim\frac{\mconst}{t^{\frac{\alpha+\delta+\epsilon}{2\alpha}}}
  \]
  for $t$ small. The existence of the Laplace transform follows, since
  $\|\Lambda_t\|=\sup_k\lambda_{t,k}$ and
  $\delta+\epsilon<2-\alpha\leq\alpha$.

  As it regards the assumptions for the non--linear problem,
  Lemma \ref{l:exist_uniq} and Lemma \ref{l:uniq} ensure that
  problem \eqref{e:spdepsilon} has a unique solution and generates
  a Feller semigroup in $L^2_\#$. Moreover the semi--group has
  a unique invariant measure $\mu_\epsilon$ which is strongly mixing.
  Let 
  \[
    B_{M,\epsilon,N}(x)
      = \frac{N}{N + \|x\|_{L^2}}\pi_N B_M(\pi_N x),
  \]
  where $\pi_N$ is the projection onto $\Span[e_k: |k|\leq N]$,
  then $B_{M,\epsilon,N}$ is Lipschitz--continuous in
  $L^2_\#$. Moreover, if $\theta_{\epsilon,N}$ is the solution of
  \eqref{e:spdepsilon} when $B_{M,\epsilon}$ is replaced
  by $B_{M,\epsilon,N}$, then $\theta_{\epsilon,N}(t)
  \to\theta_\epsilon(t)$ for all $t>0$ and all
  $x\in L^2_\#$, whenever $\theta_{\epsilon,N}(0)=\theta_\epsilon(0)$.
  It remains to prove the main assumption of \cite[Theorem 3.4]{DapDeb2004},
  namely that $B_{M,\epsilon,N}(x)\to B_{M,\epsilon}(x)$, $\mu$--{a.~s.}
  in $L^2_\#$, and that there is a $g\in L^2(L^2_\#,\mu)$ such that
  $\|B_{M,\epsilon,N}(x)\|_{L^2}\leq g(x)$, $\mu$--{a.~s.}.
  Let $\epsilon_M=0$ if $M>1$, and an arbitrary value in
  $\bigl(0,\epsilon-(2-\delta-\alpha)\bigr)$ if $M=1$, and
  let $g_\epsilon=\mcdef{cc:epsm}\|\cdot\|_{\epsilon+\epsilon_M}
  \|\cdot\|_1$. Lemma \ref{l:improved} and interpolation
  immediately imply that $g_\epsilon\in L^2(L^2_\#,\mu_\epsilon)$
  since,
  \[
    \E^{\mu_\epsilon}[g_\epsilon^2(x)]
      \leq \mcref{cc:epsm}\E^\mu
        [\|x\|_{L^2}^2\|x\|_{1-\epsilon+\epsilon_M}^2].
  \]
  Moreover, by choosing $\mcref{cc:epsm}$ large enough, we have
  that $\|B_{M,\epsilon,N}(x)\|_{L^2}\leq g_\epsilon(x)$.
  Indeed, by using the embedding of $L^\infty$ into $H^{1+\gamma}$,
  for $\gamma\leq\epsilon_M\vee 2(M-1)$, we have that
  \[
    \|B_M(x)\|_{L^2}
      \leq \mconst\|x\|_{\epsilon_M}\|x\|_1,
        \qquad
    \|B_M(x)\|_{-1}
      \leq \mconst\|x\|_{\epsilon_M}\|x\|_0,
  \]
  and hence $\|B_M(x)\|_{-\epsilon}\leq \mconst\|x\|_{\epsilon_M}
  \|x\|_{1-\epsilon}$.
\end{proof}

\end{document}